\newtheorem{theorem}{Theorem}
\newtheorem{prop}[theorem]{Proposition}
\newtheorem{thmx}{Theorem}
\theoremstyle{definition}
\newtheorem{defn}[theorem]{Definition}
\newtheorem*{example}{Example}
\newtheorem{rem}[theorem]{Remark}
\newtheorem{remark}[theorem]{Remark}
\newcommand{\inj}{\hookrightarrow}
\newcommand{\inv}{^{-1}}
\newcommand{\Ext}{\operatorname{Ext}}
\newcommand{\soc}{\operatorname{soc}}
\newcommand{\cO}{{\mathcal O}}
\newcommand{\RepG}{\operatorname{Rep} G}
\newcommand{\RepU}{\operatorname{Rep} U_q}
\newcommand{\jc}{\mathcal J}
\newcommand{\JM}{\mathbf{JM}}
\newcommand{\JI}{\mathbf{JI}_n}
\newcommand{\cat}{\cO_0}
\newcommand{\tetra}{\mathbf{BSub}_n}
\newcommand{\tetracoord}{\mathbf{Tetr}_n}
\newcommand{\so}{\operatorname{soc}\Delta_{e}/\Delta}
\newcommand{\shadows}{\mathbf{Sub}_n^\jc}
\newcommand{\sub}{\mathbf{Sub}_n}
\newcommand{\domver}{\Delta_{e}}
\newcommand{\ideal}{\mathbf{PI}}
\newcommand{\Ver}{\mathbf{Ver}_n}
\newcommand{\al}{\mathbf{ASM}}
\newcommand{\sbf}{\mathbf{S}}
\newcommand{\s}{s_1}
\renewcommand{\t}{s_2}
\newcommand{\cor}{=}
\newcommand{\hk}[1]{}
\begin{document}

\title[Alternating sign matrices and Verma modules]
{Alternating sign matrices and Verma modules}

\author[Hankyung Ko]{Hankyung Ko}

\begin{abstract}
We show that the poset of alternating sign matrices, with Bruhat order, is isomorphic to the poset of certain submodules of the dominant Verma module for the special linear Lie algebra $\mathfrak{sl}_n$. 
The latter poset consists of the intersections of Verma submodules and can also be defined in terms of a Kazhdan-Lusztig cell.
\end{abstract}

\maketitle

\section*{}

Let $\sbf_n$ be the set of permutations on $n$ letters. 
With the natural group structure given by the composition of permutations, the group $\sbf_n$ is part of the Coxeter system $(W,S)$, with the group $W=\sbf_n$ and the generating set
\[S = \{s_1,\cdots, s_{n-1}\},\]
where $s_i$ is the transposition swapping the letters $i$ and $i+1$.
In particular, each element $w \in W$ is written as $w=st\cdots u$ with $s,t,\cdots,u\in S$. Such expressions of the minimal length are called \emph{reduced expressions} of $w$.  
The Bruhat order `$\leq$' on $\sbf_n$ is equivalent to the Coxeter order on $(W,S)$, namely, for $x,y\in W$ we have $x\leq y$ if and only if $x$ has a reduced expression that is a subexpression of a reduced expression of $y$.
A left (resp., right) \emph{descent} of $w\in W$ is defined to be the elements $s\in S$ such that $sw<w$ (resp., $ws<w$); a left (resp., right) \emph{ascent} of $w\in W$ is $s\in S$ such that $sw>w$ (resp., $ws>w$).
The poset $\sbf_n$ has a unique minimal element $e\in W$, the identity element in $W$, and a unique maximal element $w_0\in W$.

If we view the elements in $\sbf_n$ as $n\times n$ permutation matrices, the following partial order is equivalent to the Bruhat order on $\sbf_n$ (see \cite[Theorem 2.1.5]{BjBr}).
For $A,B\in \sbf_n$, we have $A\leq B$ if and only if (writing $A=(A_{ij})$ and $B=(B_{ij})$)
\[ \sum_{i\leq k, j\leq l}A_{ij}\geq \sum_{i\leq k, j\leq l}B_{ij}\]
for all $1\leq k,l\leq n$.

An $n\times n$ \emph{alternating sign matrix} is a matrix $A=(A_{ij})$ such that
\begin{itemize}
    \item for each $1\leq i,j\leq n$, we have $A_{ij}\in\{0,1,-1\}$;
    \item each row (resp., column) sums to $1$, i.e., $\sum_{k}A_{ik}=1$ and $\sum_k A_{kj}=1$ for each $1\leq i,j\leq n$;
    \item the nonzero entries in each row (resp., column) alternate the sign. 
\end{itemize}
See \cite{MRRalt,manyfacesAl}.
A permutation matrix can be viewed as an alternating sign matrix whose entries are nonnegative.
Let $\al_n$ be the set of $n\times n$ alternating sign matrices.
The matrix definition of the Bruhat order above extends naturally to $\al_n$, namely, we set for $A,B\in \al_n$
\[A\leq B \Longleftrightarrow \sum_{i\leq k, j\leq l}A_{ij}\geq \sum_{i\leq k, j\leq l}B_{ij} \text{ for all $1\leq k,l\leq n$.}\]

A result of Lascoux-Sch{\"u}tzenberger \cite{LS} (see also \cite{AlBruhat} for illustrations) says that the embedding of posets 
\begin{equation}\label{S in Al}
  \sbf_n\inj \al_n  
\end{equation}
realizes the MacNeille completion of the Bruhat order on $\sbf_n$. That is, the poset $\al_n$ is the smallest lattice that contains $\sbf_n$.

The poset $\sbf_n$ has categorical lifts in Lie theoretic contexts. 
We consider the one involving Verma modules of the special linear Lie algebra $\mathfrak{g}=\mathfrak{sl}_n$ over $\mathbb C$.
Let us fix Cartan and Borel subalgebras
\begin{equation}\label{nottriangulardec}
    \mathfrak{h}\subset \mathfrak{b}\subset\mathfrak{g}.
\end{equation}
Associated to \eqref{nottriangulardec} is the root system 
and the Weyl group $(W,S)$ which acts on $\mathfrak{h}^*$. 
The Weyl group $(W,S)$ agrees with the Coxeter group $(W,S)$ above. The set $S$ consists of the simple reflections, i.e., reflections with respect to simple roots.

The Verma module of highest weight $\lambda\in \mathfrak{h}^*$ is defined by inflating the 1-dimensional $\mathfrak{h}$-module $\lambda$ to a $\mathfrak{b}$-module $\mathbb C_\lambda$ 
and then inducing to $\mathfrak{g}$. 
That is, we let
\begin{equation}\label{eqver}
\Delta(\lambda ) = U(\mathfrak{g})\otimes_{U(\mathfrak{b})} \mathbb C_\lambda.    
\end{equation}
Each Verma module $\Delta(\lambda)$ has the head isomorphic to the simple $\mathfrak{g}$-module $L(\lambda)$ of highest weight $\lambda$.  
We restrict ourselves to the Verma modules whose highest weights are of the form $w.0 = w\rho - \rho$, where $2\rho$ is the sum of all positive roots. 
(These are exactly the Verma modules and the simple modules in 
the principal block of the BGG category $\cO$; see Section \ref{s:ver}.)
Note that $w.0\neq x.0$ for $w\neq x\in W$.
Our shorthand for such Verma and simple modules is
\[\Delta_w := \Delta(w.0)\quad L_w :=L(w.0).\]

Each Verma module $\Delta_w$, for $w\in W$, is canonically a submodule of $\domver$ via the unique (up to scalar) map $\Delta_w\inj \domver$. We thus identify each $\Delta_w$ with the corresponding submodule in $\domver$. 
Then we have the poset isomorphism
\begin{equation}\label{posetiso}
    \sbf_n \cong (\{\Delta_w\subseteq \Delta_e\ |\ w\in W\},\supseteq)=:\Ver,
\end{equation}
 that is, we have for all $w,x\in W$
\[\Delta_w\subseteq \Delta_x \Longleftrightarrow w\geq x.\]

Our goal, achieved in Theorem~\ref{theoremattheend}, is to give a representation theoretic interpretation of the MacNeille completion
$\sbf_n\subseteq \al_n$ extending the isomorphism \eqref{posetiso}:

\begin{thmx}\label{mainthm}
There is a poset isomorphism
\[\al_n\cong (\{\bigcap_{x\in U}\Delta_x\subseteq \Delta_e\ |\ U\subseteq W\},\supseteq)=:\overline{\Ver}\]
such that the diagram of posets 
\[ \xymatrix{
 \al_n \ar[r]^-\simeq & \overline{\Ver} \\
 \sbf_n \ar[r]^-\simeq
 \ar@{}[u]|{\rotatebox[origin=c]{90}{$\subseteq$}} & \Ver
 \ar@{}[u]|{\rotatebox[origin=c]{90}{$\subseteq$}}   
} \]
is commutative.
\end{thmx}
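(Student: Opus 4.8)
The plan is to reduce Theorem~\ref{mainthm} to a single representation-theoretic statement about intersections of Verma submodules, using the fact that $\al_n$ is the MacNeille completion of $\sbf_n$ together with the known isomorphism \eqref{posetiso}. Recall that in a MacNeille completion the original poset is \emph{join-dense}: by the Lascoux--Sch\"utzenberger result quoted above, every $A\in\al_n$ is the join, taken in the lattice $\al_n$, of the permutations lying weakly below it,
\begin{equation*}
A=\bigvee_{\substack{w\in\sbf_n\\ w\le A}} w.
\end{equation*}
Under \eqref{posetiso} the Bruhat order corresponds to reverse inclusion $\supseteq$ of Verma submodules, so a join in $\sbf_n$ must correspond to an intersection of the associated submodules. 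This suggests defining
\begin{equation*}
\Phi\colon \al_n\longrightarrow \overline{\Ver},\qquad \Phi(A)=\bigcap_{\substack{w\in\sbf_n\\ w\le A}}\Delta_w\ \subseteq\ \Delta_e .
\end{equation*}
First I would record the easy properties of $\Phi$. It visibly lands in $\overline{\Ver}$, and it is order-preserving: if $A\le B$ then $\{w\le A\}\subseteq\{w\le B\}$, so $\Phi(A)\supseteq\Phi(B)$, i.e.\ $\Phi(A)\le\Phi(B)$ in $(\overline{\Ver},\supseteq)$. On a permutation $v$ the index set $\{w\le v\}$ has maximum $v$, and since $\Delta_v\subseteq\Delta_w$ for all $w\le v$ we get $\Phi(v)=\Delta_v$; thus $\Phi$ restricts to \eqref{posetiso} and the square in the theorem commutes.

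\textbf{The key step.} Everything then reduces to the following claim: for every $U\subseteq W$ and every $w\in W$,
\begin{equation}\label{keyclaim}
\bigcap_{x\in U}\Delta_x\subseteq\Delta_w\quad\Longleftrightarrow\quad w\le\bigvee_{x\in U}x,
\end{equation}
the join being taken in $\al_n$. Granting \eqref{keyclaim}, surjectivity of $\Phi$ is immediate: for a given $U$, put $A=\bigvee_{x\in U}x$; then \eqref{keyclaim} gives $\{w\le A\}=\{w:\Delta_w\supseteq\bigcap_{x\in U}\Delta_x\}$, and intersecting the corresponding Vermas yields $\Phi(A)=\bigcap_{x\in U}\Delta_x$. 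The same identity $\{w\le A\}=\{w:\Delta_w\supseteq\Phi(A)\}$, applied with $U=\{w\le A\}$ and join-density, shows that $\Phi$ is order-reflecting: if $\Phi(A)\supseteq\Phi(B)$ then $\{w:\Delta_w\supseteq\Phi(A)\}\subseteq\{w:\Delta_w\supseteq\Phi(B)\}$, i.e.\ $\{w\le A\}\subseteq\{w\le B\}$, hence $A\le B$. Together with order-preservation this makes $\Phi$ an isomorphism of posets and, with the commuting square already in hand, completes the proof.

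\textbf{The main obstacle.} The whole difficulty is concentrated in \eqref{keyclaim}, and more precisely in the implication ``$\Leftarrow$''. One half is formal: each $x\in U$ satisfies $x\le\bigvee_{x\in U}x$ and $\bigcap_{x\in U}\Delta_x\subseteq\Delta_x$, and both sides of \eqref{keyclaim} are down-closed in the Bruhat order, so the statement is clear for those $w$ lying below some single $x\in U$. The content is that a permutation $w$ which is Bruhat-below the \emph{lattice join} $\bigvee_{x\in U}x$ --- an alternating sign matrix that need not be a permutation, and below which $w$ may sit without being below any individual $x$ --- nonetheless has $\Delta_w\supseteq\bigcap_{x\in U}\Delta_x$. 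Proving this requires genuine input from the structure of $\cat$: I would establish a description of $\bigcap_{x\in U}\Delta_x$ intrinsic enough to be compared with the rank-sum inequalities defining the Bruhat order on $\al_n$. Concretely, I would identify which $\Delta_w$ contain a given intersection by computing the socle/weight data of $\Delta_e\big/\bigcap_{x\in U}\Delta_x$, and match this with the alternating-sign-matrix order via the Kazhdan--Lusztig cell description of $\overline{\Ver}$ advertised in the abstract. This matching of the sum conditions defining $\le$ on $\al_n$ with the containment pattern of Verma intersections is the crux, and is precisely where the combinatorics of alternating sign matrices and the representation theory of $\mathfrak{sl}_n$ meet.
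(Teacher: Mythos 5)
Your formal scaffolding is correct: granting your key claim \(\bigcap_{x\in U}\Delta_x\subseteq\Delta_w \Longleftrightarrow w\le\bigvee_{x\in U}x\), the map \(\Phi(A)=\bigcap_{w\le A}\Delta_w\) is an order isomorphism extending \eqref{posetiso}, by exactly the join-density argument you give. But the key claim is never proved, and it is not really a \emph{reduction}: since joins in \(\overline{\Ver}\) are intersections, the claim is essentially equivalent to Theorem~\ref{mainthm} itself, so your proposal defers the entire content of the paper to a step for which you offer only a one-sentence strategy (``compute socle/weight data and match with rank sums''). Two further inaccuracies: first, your assertion that one implication is ``formal'' is wrong --- your argument only covers \(w\) lying below a single \(x\in U\), whereas the order-reflection of \(\Phi\) needs the full forward implication: a priori \(\bigcap_{x\in U}\Delta_x\) could be contained in \(\Delta_w\) for some \(w\not\le\bigvee_{x\in U}x\), and ruling this out requires the same representation-theoretic input as the converse. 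Second, the claim is genuinely type-\(A\)-specific (the paper remarks that the analogous statements fail outside type \(A\), e.g.\ the join of two elements need not correspond to the intersection of their Vermas in type \(E\)), so no purely lattice-theoretic or formal argument can close the gap.

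For comparison, the paper never verifies your key claim directly; its architecture runs through join-irreducibles rather than joins. It characterizes the intersections \(\bigcap_{x\in U}\Delta_x\) as exactly the gradable submodules \(M\subseteq\domver\) with \([\soc(\domver/M):L_z]=0\) for \(z\notin\jc\) (Proposition~\ref{shadows=intver}), the engine being Proposition~\ref{max=soc}: for an antichain \(U\) one has \(\soc(\domver/\bigcap_{M\in U}M)\cong\bigoplus_{M\in U}\soc(\domver/M)\), which in turn rests on the multiplicity-one property of \(\jc\)-constituents in \(\domver\) (Proposition~\ref{multfreeJ}) and the bijection of Proposition~\ref{tetraJIA} between bigrassmannian (= join-irreducible) permutations \(w\) and the simple socles \(\so_w\), both imported from \cite{kmmBig}. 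This converts \(\overline{\Ver}\) into the ideal lattice \(\ideal(\JI)\) (Proposition~\ref{shadow to JIideal}), and the combinatorial half is then the fundamental theorem of distributive lattices together with the Lascoux--Sch\"utzenberger identification of the join-irreducibles of \(\al_n\) with the tetrahedral poset \(\tetracoord\) (Propositions~\ref{JI=tetracoord} and~\ref{tetracoord=al}). Any completion of your proposal would have to reconstruct essentially this socle machinery, so as written the proposal contains a correct frame but none of the proof.
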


The intersections $\bigcap_{x\in U}\Delta_x$ have a nice characterization implicit in \cite{kmmBig}.
The characterization is in terms of the Kazhdan-Lusztig two-sided (pre)order $\leq_J$ on $\sbf_n$ with respect to the Kazhdan-Lusztig basis \cite{KL79} (see \cite[Section 2.3 and Section 1.1]{kmmBig} for definitions and more information; see also \cite{geckcellA} which describes the order $\leq_J$ via the Robinson-Schensted correspondence as the dominance order on partitions).
With respect to $\leq_J$, the element $w_0\in W$ is maximal and forms, by itself, the ultimate two-sided cell $\{w_0\}$. 
All $w_0s$ (and $sw_0$) with $s\in S$ belong to the same two-sided cell $\jc$, called the \emph{penultimate cell}, which is maximal in $W\setminus \{w_0\}$ with respect to $\leq_{J}$.
The set $\jc$ is thought of as an $(n-1)\times (n-1)$ grid: it contains for each $1\leq i,j\leq n-1$ exactly one element $z\in W$ with left (resp., right) ascent $\{s_i\}$ (resp., $\{s_j\}$) and no other elements.

Then we have (see Proposition~\ref{shadows=intver})
\begin{equation}\label{introtwo}
  \{\bigcap_{x\in U}\Delta_x\subseteq \Delta_e\ |\ U\subseteq W\}   = \{M\subseteq \domver\text{ gradable} \ |\ [\soc(\domver / M): L_z] =0 \text{ for $z\not\in \jc$}\}=:\shadows.  
\end{equation}
The gradable condition in the right hand side refers to the positive grading explained in Section~\ref{s:ver}, although the condition is equivalent to requiring that the socle series of $M$ agrees with the socle series of $\Delta_e$ intersected with $M$, which needs no reference to the grading.
 
\begin{example}
Let $n=3$. We label the simple reflections as
$S=\{\s, \t\}$. 
The two-sided order on $\sbf_3$ is $\{e\}<\jc=\{\s,\t,\s\t,\t\s\}<\{w_0\}$, where $w_0=\s\t\s=\t\s\t$, and
 $\sbf_3$ is embedded into the poset $\al_3$ as displayed in Figure~\ref{al_3}. The only element in $\al_3$ that does not belong to $\sbf_3$ can be written as $\s\vee\t$.
The corresponding submodules of $\domver$ and their visualization are in Figure~\ref{Ver_3}. A circle at $(w,d)$ corresponds to the composition factor $L_w$ (with $w\in \jc$) of $\Delta_e$ in the $d$-th radical layer; the circle being filled (resp., empty) encodes the composition factor being (resp., not being) contained in $\Delta_w\subseteq \Delta_e$.  
Note for any $U\subset \sbf_3$ that the intersection $\bigcap_{x\in U}\Delta_x $ coincides with one of the displayed submodules, e.g., $\Delta_{\s\t}\cap\Delta_{\t\s} = \Delta_{w_0}$.

\begin{figure}
\[\xymatrix@=2mm{
&*+{e= \left[\begin{array}{ccc} 1& 0  &0   \\0  &1   &0   \\ 0 &0   &1  \end{array}\right]}
\ar@{-}[dl]\ar@{-}[dr]
&\\
*+{\s=\left[\begin{array}{ccc} 0& 1  &0  \\ 1 & 0  &0   \\0  &0   &1  \end{array}\right]}
\ar@{-}[dr]
&&*+{\t=\left[\begin{array}{ccc}  1 & 0  &0   \\ 0 &0  &1   \\0  &1   &0  \end{array}\right]}
\ar@{-}[dl]
\\
&*+{\left[\begin{array}{ccc} 0 &1   &0   \\1  &-1   &1   \\0 &1   &0  \end{array}\right]}
\ar@{-}[dl]\ar@{-}[dr]
\\
*+{\t\s=\left[\begin{array}{ccc}0 &1   &0   \\0  &0   &1   \\ 1 & 0  &0  \end{array}\right]}
\ar@{-}[dr]
&&*+{\s\t=\left[\begin{array}{ccc}0 &0   &1   \\ 1 &0  &0   \\0  & 1  &0  \end{array}\right]}\ar@{-}[dl]
\\
&*+{w_0=\left[\begin{array}{ccc} 0  &0   &1   \\ 0 & 1  & 0  \\1  & 0  &0  \end{array}\right]}
&
}
\]
\caption{The poset $\al_3$}
\label{al_3}
\end{figure}

\begin{figure}[ht]
    \centering
$\Delta_e \cor$ 
\tdplotsetmaincoords{110}{130} 
\begin{tikzpicture}[baseline=(AA.south),tdplot_main_coords, scale=1.3]

\filldraw[black] (1, 1, -1) circle (1.5pt) node[anchor=west](AA) {\tiny $(\t, 1)$};
\filldraw[black] (1, 2, -2) circle (1.5pt) node[anchor=north] {\tiny $(\t\s, 2)$};
\filldraw[black] (2, 1, -2) circle (1.5pt) node[anchor=north] {\tiny $(\s\t, 2)$};
\filldraw[black] (2, 2, -1) circle (1.5pt) node[anchor=west] {\tiny $(\s, 1)$};

\draw (1, 1, -1) -- (1, 2, -2);
\draw (1, 1, -1) -- (2, 1, -2);
\draw (2, 2, -1) -- (1, 2, -2);
\draw (2, 2, -1) -- (2, 1, -2);

\draw[gray, dashed] (1, 1, -2) -- (1, 2, -2) -- (2, 2, -2) -- (2, 1, -2) -- cycle;
\draw[gray, dashed] (1, 1, -1) -- (1, 2, -1) -- (2, 2, -1) -- (2, 1, -1) -- cycle;
\draw[gray, dashed] (1, 1, -2) -- (1, 1, -1);
\draw[gray, dashed] (1, 2, -2) -- (1, 2, -1);
\draw[gray, dashed] (2, 2, -2) -- (2, 2, -1);
\draw[gray, dashed] (2, 1, -2) -- (2, 1, -1);
\draw[gray, dashed] (1, 2, -2) -- (2, 1, -2);
\draw[gray, dashed] (1, 1, -1) -- (2, 2, -1);

\end{tikzpicture}

$\Delta_{\s} \cor$ 
\tdplotsetmaincoords{110}{130} 
\begin{tikzpicture}[baseline=(AA.south),tdplot_main_coords, scale=1.3]

\draw[black] (1, 1, -1) circle (1.5pt) node[anchor=west](AA) {\tiny $(\t, 1)$};
\filldraw[black] (1, 2, -2) circle (1.5pt) node[anchor=north] {\tiny $(\t\s, 2)$};
\filldraw[black] (2, 1, -2) circle (1.5pt) node[anchor=north] {\tiny $(\s\t, 2)$};
\filldraw[black] (2, 2, -1) circle (1.5pt) node[anchor=west] {\tiny $(\s, 1)$};

\draw[gray, dashed] (1, 1, -1) -- (1, 2, -2);
\draw[gray, dashed] (1, 1, -1) -- (2, 1, -2);
\draw (2, 2, -1) -- (1, 2, -2);
\draw (2, 2, -1) -- (2, 1, -2);

\draw[gray, dashed] (1, 1, -2) -- (1, 2, -2) -- (2, 2, -2) -- (2, 1, -2) -- cycle;
\draw[gray, dashed] (1, 1, -1) -- (1, 2, -1) -- (2, 2, -1) -- (2, 1, -1) -- cycle;
\draw[gray, dashed] (1, 1, -2) -- (1, 1, -1);
\draw[gray, dashed] (1, 2, -2) -- (1, 2, -1);
\draw[gray, dashed] (2, 2, -2) -- (2, 2, -1);
\draw[gray, dashed] (2, 1, -2) -- (2, 1, -1);
\draw[gray, dashed] (1, 2, -2) -- (2, 1, -2);
\draw[gray, dashed] (1, 1, -1) -- (2, 2, -1);

\end{tikzpicture}
$\quad \quad \quad \quad \Delta_{\t} \cor$ 
\tdplotsetmaincoords{110}{130} 
\begin{tikzpicture}[baseline=(AA.south),tdplot_main_coords, scale=1.3]

\filldraw[black] (1, 1, -1) circle (1.5pt) node[anchor=west](AA) {\tiny $(\t, 1)$};
\filldraw[black] (1, 2, -2) circle (1.5pt) node[anchor=north] {\tiny $(\t\s, 2)$};
\filldraw[black] (2, 1, -2) circle (1.5pt) node[anchor=north] {\tiny $(\s\t, 2)$};
\draw[black] (2, 2, -1) circle (1.5pt) node[anchor=west] {\tiny $(\s, 1)$};

\draw (1, 1, -1) -- (1, 2, -2);
\draw (1, 1, -1) -- (2, 1, -2);
\draw[gray, dashed] (2, 2, -1) -- (1, 2, -2);
\draw[gray, dashed] (2, 2, -1) -- (2, 1, -2);

\draw[gray, dashed] (1, 1, -2) -- (1, 2, -2) -- (2, 2, -2) -- (2, 1, -2) -- cycle;
\draw[gray, dashed] (1, 1, -1) -- (1, 2, -1) -- (2, 2, -1) -- (2, 1, -1) -- cycle;
\draw[gray, dashed] (1, 1, -2) -- (1, 1, -1);
\draw[gray, dashed] (1, 2, -2) -- (1, 2, -1);
\draw[gray, dashed] (2, 2, -2) -- (2, 2, -1);
\draw[gray, dashed] (2, 1, -2) -- (2, 1, -1);
\draw[gray, dashed] (1, 2, -2) -- (2, 1, -2);
\draw[gray, dashed] (1, 1, -1) -- (2, 2, -1);

\end{tikzpicture}

$\Delta_{\s}\cap\Delta_{\t} \cor$ 
\tdplotsetmaincoords{110}{130} 
\begin{tikzpicture}[baseline=(AA.south),tdplot_main_coords, scale=1.3]

\draw[black] (1, 1, -1) circle (1.5pt) node[anchor=west](AA) {\tiny $(\t, 1)$};
\filldraw[black] (1, 2, -2) circle (1.5pt) node[anchor=north] {\tiny $(\t\s, 2)$};
\filldraw[black] (2, 1, -2) circle (1.5pt) node[anchor=north] {\tiny $(\s\t, 2)$};
\draw[black] (2, 2, -1) circle (1.5pt) node[anchor=west] {\tiny $(\s, 1)$};

\draw[gray, dashed] (1, 1, -1) -- (1, 2, -2);
\draw[gray, dashed] (1, 1, -1) -- (2, 1, -2);
\draw[gray, dashed] (2, 2, -1) -- (1, 2, -2);
\draw[gray, dashed] (2, 2, -1) -- (2, 1, -2);

\draw[gray, dashed] (1, 1, -2) -- (1, 2, -2) -- (2, 2, -2) -- (2, 1, -2) -- cycle;
\draw[gray, dashed] (1, 1, -1) -- (1, 2, -1) -- (2, 2, -1) -- (2, 1, -1) -- cycle;
\draw[gray, dashed] (1, 1, -2) -- (1, 1, -1);
\draw[gray, dashed] (1, 2, -2) -- (1, 2, -1);
\draw[gray, dashed] (2, 2, -2) -- (2, 2, -1);
\draw[gray, dashed] (2, 1, -2) -- (2, 1, -1);
\draw[gray, dashed] (1, 2, -2) -- (2, 1, -2);
\draw[gray, dashed] (1, 1, -1) -- (2, 2, -1);

\end{tikzpicture}

$\Delta_{\t\s} \cor$ 
\tdplotsetmaincoords{110}{130} 
\begin{tikzpicture}[baseline=(AA.south),tdplot_main_coords, scale=1.3]

\draw[black] (1, 1, -1) circle (1.5pt) node[anchor=west](AA) {\tiny $(\t, 1)$};
\filldraw[black] (1, 2, -2) circle (1.5pt) node[anchor=north] {\tiny $(\t\s, 2)$};
\draw[black] (2, 1, -2) circle (1.5pt) node[anchor=north] {\tiny $(\s\t, 2)$};
\draw[black] (2, 2, -1) circle (1.5pt) node[anchor=west] {\tiny $(\s, 1)$};

\draw[gray, dashed] (1, 1, -1) -- (1, 2, -2);
\draw[gray, dashed] (1, 1, -1) -- (2, 1, -2);
\draw[gray, dashed] (2, 2, -1) -- (1, 2, -2);
\draw[gray, dashed] (2, 2, -1) -- (2, 1, -2);

\draw[gray, dashed] (1, 1, -2) -- (1, 2, -2) -- (2, 2, -2) -- (2, 1, -2) -- cycle;
\draw[gray, dashed] (1, 1, -1) -- (1, 2, -1) -- (2, 2, -1) -- (2, 1, -1) -- cycle;
\draw[gray, dashed] (1, 1, -2) -- (1, 1, -1);
\draw[gray, dashed] (1, 2, -2) -- (1, 2, -1);
\draw[gray, dashed] (2, 2, -2) -- (2, 2, -1);
\draw[gray, dashed] (2, 1, -2) -- (2, 1, -1);
\draw[gray, dashed] (1, 2, -2) -- (2, 1, -2);
\draw[gray, dashed] (1, 1, -1) -- (2, 2, -1);

\end{tikzpicture}
$\quad \quad \quad \quad \Delta_{\s\t} \cor$ 
\tdplotsetmaincoords{110}{130} 
\begin{tikzpicture}[baseline=(AA.south),tdplot_main_coords, scale=1.3]

\draw[black] (1, 1, -1) circle (1.5pt) node[anchor=west](AA) {\tiny $(\t, 1)$};
\draw[black] (1, 2, -2) circle (1.5pt) node[anchor=north] {\tiny $(\t\s, 2)$};
\filldraw[black] (2, 1, -2) circle (1.5pt) node[anchor=north] {\tiny $(\s\t, 2)$};
\draw[black] (2, 2, -1) circle (1.5pt) node[anchor=west] {\tiny $(\s, 1)$};

\draw[gray, dashed] (1, 1, -1) -- (1, 2, -2);
\draw[gray, dashed] (1, 1, -1) -- (2, 1, -2);
\draw[gray, dashed] (2, 2, -1) -- (1, 2, -2);
\draw[gray, dashed] (2, 2, -1) -- (2, 1, -2);

\draw[gray, dashed] (1, 1, -2) -- (1, 2, -2) -- (2, 2, -2) -- (2, 1, -2) -- cycle;
\draw[gray, dashed] (1, 1, -1) -- (1, 2, -1) -- (2, 2, -1) -- (2, 1, -1) -- cycle;
\draw[gray, dashed] (1, 1, -2) -- (1, 1, -1);
\draw[gray, dashed] (1, 2, -2) -- (1, 2, -1);
\draw[gray, dashed] (2, 2, -2) -- (2, 2, -1);
\draw[gray, dashed] (2, 1, -2) -- (2, 1, -1);
\draw[gray, dashed] (1, 2, -2) -- (2, 1, -2);
\draw[gray, dashed] (1, 1, -1) -- (2, 2, -1);

\end{tikzpicture}

$\Delta_{w_0} \cor$ 
\tdplotsetmaincoords{110}{130} 
\begin{tikzpicture}[baseline=(AA.south),tdplot_main_coords, scale=1.3]

\draw[black] (1, 1, -1) circle (1.5pt) node[anchor=west](AA) {\tiny $(\t, 1)$};
\draw[black] (1, 2, -2) circle (1.5pt) node[anchor=north] {\tiny $(\t\s, 2)$};
\draw[black] (2, 1, -2) circle (1.5pt) node[anchor=north] {\tiny $(\s\t, 2)$};
\draw[black] (2, 2, -1) circle (1.5pt) node[anchor=west] {\tiny $(\s, 1)$};

\draw[gray, dashed] (1, 1, -1) -- (1, 2, -2);
\draw[gray, dashed] (1, 1, -1) -- (2, 1, -2);
\draw[gray, dashed] (2, 2, -1) -- (1, 2, -2);
\draw[gray, dashed] (2, 2, -1) -- (2, 1, -2);

\draw[gray, dashed] (1, 1, -2) -- (1, 2, -2) -- (2, 2, -2) -- (2, 1, -2) -- cycle;
\draw[gray, dashed] (1, 1, -1) -- (1, 2, -1) -- (2, 2, -1) -- (2, 1, -1) -- cycle;
\draw[gray, dashed] (1, 1, -2) -- (1, 1, -1);
\draw[gray, dashed] (1, 2, -2) -- (1, 2, -1);
\draw[gray, dashed] (2, 2, -2) -- (2, 2, -1);
\draw[gray, dashed] (2, 1, -2) -- (2, 1, -1);
\draw[gray, dashed] (1, 2, -2) -- (2, 1, -2);
\draw[gray, dashed] (1, 1, -1) -- (2, 2, -1);

\end{tikzpicture}
\caption{The elements in $\overline{\mathbf{Ver}_3}$}
\label{Ver_3}
\end{figure}
\end{example}

\begin{rem}
The isomorphism $\Ver\cong (W,\leq)$ holds for all semisimple complex Lie algebras $\mathfrak{g}$ and their Weyl groups $W$ with the Bruhat order $\leq$. 
However, in all types other than $A$ (and other than the dihedral types), the equality \eqref{introtwo} does not hold. 
This follows from a recent work \cite{kmmJoin} where related homological problems are discussed.
\hk{We have $\subsetneq$. for DEF the multiplicity can be 2 so the rhs is infinite while lhs is always finite. for type B consider the situation with ox in the same level same descent. there's $y\in W$ of type $O_B$ which has ox as soc. suppose $N$ is what has o as the socle. we claim N is not the intersection of vermas. note that x is not in $\Delta_y$ while $N$ has it, so $N$ is not contained in $\Delta_y$. but by Prop \ref{max=soc} and since $y$ is the only $w\in W$ with o in the socle, we need $U$ to contain $y$. contradiction.  }
Also, neither the poset $\overline{\Ver}$ nor the poset $(\shadows,\supseteq)$ seems to agree with the MacNeille completion of the Bruhat order on the Weyl group in general. 
\hk{for the rhs it is clear (in type B MC does not contain the N above). 

the lhs actually seems to agree with the MN in type B (type B is dissective so the MacNeille agrees with the ideal poset. the ideal poset should agree with the intersection of vermas!!). 

for others recall the example where join and intersection don't agree when U is not JM.
in the type E example, we have $x\vee y = b$ while $\Delta_x\cap\Delta_y \neq \Delta_b$. so $\Ver\subset \overline{\Ver}$ is not meet dense and thus is not MacNeille completion (see harding's lecture 5, which says $\overline\Ver$ is not the ideal completion either)
question: KMM question (saying join=intersection for all U) implies $\overline\Ver$ is the ideal completion? should it hold in type B?}
However, it is a good question whether either $\overline{\Ver}$ or $\shadows$ is related to some variation of the alternating sign matrices (see \cite{kuperbergXASMs} for several of them).
\end{rem}

\begin{rem}
A similar realization of the alternating sign matrices might exist for other Lie theoretic categories. 
We do not look into this in the current paper, except we easily derive an analogue of our main theorem for the periplectic Lie superalgebra using the result in \cite{ChM}. 
We explain this example in Remark~\ref{perip}.

\hk{Rep SLn seems difficult but the modular category O needs to be considered!}
\hk{For quantum groups at rou Andersen-Kaneda gives morphisms between vermas preserving the bruhat order. 
they are not injective and there are other morphisms between vermas (maybe wrong graded though) but the socle of cokernel might be determinable and of the form $L_{zt}$ where $z\in \jc$ and $t\in \mathbb Z R$ where we view the affine weylf group $W$ as $W_f$ semidrect $\mathbb Z R$. 
the translation $t$ should be determined by $w$. 
More precisely, if we have $w\leq w'$ in the affine Weyl group, we write them as $w=xt$ and $w'= x't'$.
If $t\neq t'$ there is no morphism (maybe this and everything requires `not too close to the wall' i.e., $X^{++}$ in AK).
If $t=t'$ then $x\leq x' \in W_f$ so we can do the usual finite weyl group combinatorics. 
Perhaps we have
\[\soc coker (\Delta_{w} \to \Delta_{w_0t}) = \oplus_{z\in\JM(ww_0)} L_{\phi(z)t}\]
where $\phi(z)\in\jc$ is so that $\so_z = L_{\phi(z)}$ for the finite case.
}
\end{rem}

\subsection*{Acknowledgments}
We thank Valentin Buciumas for comments.

\section{Verma modules}

\subsection{Category O}\label{s:ver}
Verma modules are best discussed in the Bernstein-Gelfand-Gelfand category $\cO$. 
The category $\cO$ is defined from a triangular decomposition 
\begin{equation}\label{triangulardec}
\mathfrak{g} = \mathfrak{n}^+\oplus\mathfrak{h}\oplus\mathfrak{n}^-.
\end{equation}
The category $\cO$, by definition, consists of the $\mathfrak{g}$-modules
which are finitely generated, semisimple over $\mathfrak{h}$, and locally finite over $\mathfrak{n}^+$.
The decomposition~\eqref{triangulardec} fixes Cartan and Borel subalgebras as in \eqref{nottriangulardec} by letting $\mathfrak{b} = \mathfrak{n}^+\oplus\mathfrak{h}$, and the Verma modules, as defined in \eqref{eqver}, belong to $\cO$.

The simple objects in $\cO$ arise as the head of Verma modules. The head of $\Delta(\lambda)$, for $\lambda\in\mathfrak{h}^*$, is simple and is of highest weight $\lambda$. 
We denote this simple module by $L(\lambda)$.
The set $\{L(\lambda)\}_{\lambda\in\mathfrak{h}^*}$ gives a complete irredundant list of simple objects in $\cO$ up to isomorphism.

The category $\cO$ decomposes into blocks each of which has finitely many simple objects. 
We will work with a largest (regular) block: the \emph{principal block} $\cO_0$ is the block of $\cO$ which contains the trivial $\mathfrak{g}$-module.  
The Verma modules in $\cO_0$ are exactly $\Delta_w = \Delta(w.0)$ for $w\in W$ and the simple objects in $\cO_0$ are exactly $L_w=L(w.0)$ for $w\in W$. 

By Soergel \cite{SoeKatO}, the category $\cO_0$ has a ($\mathbb Z$-)graded lift where the grading is moreover Koszul. 
The Verma modules are gradable and their grading agrees (up to shift) with the radical series. 
By slight abuse, we denote by $L_w$ and $\Delta_w$ the graded simple and Verma modules and denote by $\cO_0$ the graded principal block of $\cO$ for the rest of the paper.
We set $L_w$ to be concentrated in degree $0$ and $\Delta_w$ to have its head in degree $\ell(w)$.
Here $\ell(w)$ is the Coxeter length of $w$ in $(W,S)$, which is by definition the length of a reduced expression of $w$.
We denote by $\langle -\rangle$ the grade shift to the positive direction, so that $L_w\langle d \rangle$ is in degree $d$.
In other words, we have for each $w\in W$ the short exact sequence 
\[0\to \operatorname{rad} \Delta_w\to \Delta_w \to L_w\langle \ell(w)\rangle \to 0\]
of graded morphisms in $\cO_0$.
Our setting also makes, for $w\geq x$, the inclusions  $\Delta_w\inj \Delta_x$
graded.

\subsection{The penultimate cell and join-irreducible elements}\label{s:JandJI}

We now state a few results from \cite{kmmBig} which provide a groundwork for the current paper. 
Recall that $\jc$ denotes the penultimate Kazhdan-Lusztig two-sided cell in $W$.

\begin{prop}\cite[Proposition 3]{kmmBig}\label{socinJ}
For each $w,x\in W$, if (a shift of) $L_x$ appears as a summand of $\so_w$, then we have $x\in \jc$.
\end{prop}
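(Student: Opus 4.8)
The plan is to convert the socle condition into an extension condition and then extract the cell constraint. Set $Q_w:=\domver/\Delta_w$; then $L_x$ is a summand of $\soc(Q_w)$ precisely when $\Hom(L_x,Q_w)\neq0$. Applying $\Hom(L_x,-)$ to $0\to\Delta_w\to\domver\to Q_w\to0$ and using that every Verma module in a regular block has simple socle $L_{w_0}$ (so $\soc\Delta_w\cong\soc\domver\cong L_{w_0}$), I find that for $x\neq w_0$ both $\Hom(L_x,\Delta_w)$ and $\Hom(L_x,\domver)$ vanish, whence
\[\Hom(L_x,Q_w)\;\cong\;\ker\bigl(\Ext^1(L_x,\Delta_w)\to\Ext^1(L_x,\domver)\bigr).\]
The factor $x=w_0$ is eliminated immediately: since $P_{e,w_0}=1$ we have $[\domver:L_{w_0}]=P_{e,w_0}(1)=1$, and this unique copy is the common socle of $\Delta_w$ and $\domver$, so $L_{w_0}$ is not even a composition factor of $Q_w$ (consistent with $w_0\notin\jc$). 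It remains to show that a nonzero class $\xi\in\Ext^1(L_x,\Delta_w)$ that dies in $\Ext^1(L_x,\domver)$ forces $x\in\jc$.

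Two features are essential and I would use them together. First, the requirement that $\xi$ be ``realized inside $\domver$'' cannot be dropped: $\Ext^1(L_x,\Delta_w)$ is nonzero for many $x\notin\jc$ --- e.g.\ for $n\geq3$, $w\in S$ and $x=e$, where $L_e$ glues onto the head $L_w$ of $\Delta_w$ but the extension never embeds in $\domver$. To exploit the kernel condition I would use that $\domver=\Delta(0)$ is the \emph{projective} cover $P_e$ of $L_e$, together with the Koszulity (hence rigidity) of $\cat$, to compute $\Ext^1(L_x,\domver)$ and the comparison map from the linear resolution of $L_x$. Second, I would run an induction on $\ell(w)$: choosing a right descent $s$ of $w$ gives $\Delta_w\subseteq\Delta_{ws}$ and a short exact sequence $0\to\Delta_{ws}/\Delta_w\to Q_w\to Q_{ws}\to0$, so every socle constituent of $Q_w$ is a socle constituent of $\Delta_{ws}/\Delta_w$ or of $Q_{ws}$; the latter lies in $\jc$ by induction, reducing the whole statement to the rank-one claim $\soc(\Delta_{ws}/\Delta_w)\subseteq\jc$. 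There the translation functor onto the $s$-wall annihilates $\Delta_{ws}/\Delta_w$ (the two Vermas are identified on the wall), which forces every composition factor $L_x$ to satisfy $xs>x$; the mirror argument with a left descent produces an analogous left-ascent constraint.

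The main obstacle is the last, genuinely cell-theoretic step: passing from ``$x$ carries the prescribed ascents'' to full membership $x\in\jc$, i.e.\ to $x$ having a \emph{unique} left and right ascent and, above all, lying in no two-sided cell strictly $\leq_J$-below $\jc$. For $\mathfrak{sl}_3$ this is free, because the only composition factors of $\domver$ lying strictly between the head $L_e$ and the socle $L_{w_0}$ are exactly those of $\jc$; but for general $n$ there are many intermediate cells between $\{w_0\}$ and $\{e\}$, so this shortcut collapses and the projectivity/rigidity input must be fed into Kazhdan--Lusztig combinatorics. Concretely, I expect to bound how ``deep'' a factor $L_x$ can attach onto a Verma inside $\domver$ using Lusztig's $a$-function --- or, equivalently in type $A$, the Robinson--Schensted description of $\leq_J$ as dominance of partitions --- and to show that any cell strictly $\leq_J$-below $\jc$ is too deep to supply the socle of $Q_w$. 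Establishing precisely this comparison is where I expect the real difficulty to lie.
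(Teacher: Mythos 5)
Your reductions are sound as far as they go, but the proof has a genuine, and in fact acknowledged, gap at exactly the step that constitutes the whole content of the proposition. The long exact sequence argument, the elimination of $x=w_0$, the induction via $0\to\Delta_{ws}/\Delta_w\to Q_w\to Q_{ws}\to 0$, and the wall-crossing annihilation $T_s(\Delta_{ws}/\Delta_w)=0$ (the induced map $T_s\Delta_w\to T_s\Delta_{ws}$ is an injective endomorphism of the wall Verma, hence an isomorphism since its endomorphism ring is $\mathbb C$) are all correct. But they only yield that each socle constituent $L_x$ of $Q_w$ has some prescribed right ascent. This does not rule out any of the many intermediate two-sided cells between $\{e\}$ and $\jc$ for $n\geq 4$: having $s$ as an ascent is satisfied by a large portion of $W$, and nothing in your argument delivers the \emph{uniqueness} of the left and right ascents, nor the $a$-function/dominance-order bound placing $x$ in the penultimate cell. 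Your proposed rank-one claim $\soc(\Delta_{ws}/\Delta_w)\subseteq\jc$-constituents is not a simplification but essentially an equivalent restatement of the proposition itself (take $w=s$, $ws=e$ to recover a special case of the original claim), so the induction shifts the difficulty rather than resolving it. A secondary issue: your ``mirror argument with a left descent'' is not automatic, since translation functors detect right descents only; implementing the left-hand constraint requires either twisting/shuffling functors or the (nontrivial, Soergel-theoretic) self-equivalence of $\cat$ realizing $x\mapsto x^{-1}$ on simples, neither of which you supply.

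For the comparison you asked for: the paper gives no proof of this statement at all --- it is imported verbatim as \cite[Proposition 3]{kmmBig}, and the present paper uses it as black-box input (together with Propositions~\ref{multfreeJ} and~\ref{tetraJIA} from the same source) for the results of Section~\ref{s:sub}. So the standard against which your attempt must be measured is whether it is a complete independent proof, and it is not: everything up to the cell-theoretic finish is correct bookkeeping, and the finish --- bounding how deep in the two-sided order a simple can sit in the socle of $\domver/\Delta_w$, which is where Kazhdan--Lusztig positivity or the Robinson--Schensted/dominance description of $\leq_J$ must actually be used --- is precisely what remains unproven.
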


\begin{prop}\cite[Proposition 12]{kmmBig}\label{multfreeJ}
For each $z\in \jc$ and $d\in \mathbb Z$ we have
\[[\domver:L_z \langle d \rangle] \leq 1.\]
\end{prop}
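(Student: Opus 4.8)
The plan is to translate the statement into one about Kazhdan--Lusztig polynomials and then to exploit the very rigid combinatorics of the penultimate cell. By Soergel's Koszulity of $\cO_0$ together with the Kazhdan--Lusztig conjecture, the graded composition multiplicities of $\domver=\Delta_e$ are governed by the Kazhdan--Lusztig polynomials $P_{e,z}$: with the paper's grading conventions, writing $P_{e,z}(q)=\sum_k c_k\,q^k$, one has
\[
[\domver : L_z\langle \ell(z)-2k\rangle] = c_k,
\]
while $L_z\langle d\rangle$ does not occur for $d\not\equiv \ell(z)\pmod 2$. Thus each graded multiplicity is a \emph{single} coefficient of $P_{e,z}$, and distinct coefficients sit in distinct degrees. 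Consequently the assertion $[\domver : L_z\langle d\rangle]\le 1$ is equivalent to the purely combinatorial statement that, for every $z\in\jc$, all coefficients of $P_{e,z}$ are at most $1$. First I would record this reduction carefully, pinning down the normalization against the paper's conventions (head of $\Delta_w$ in degree $\ell(w)$; socle $L_{w_0}$ of $\domver$ in degree $\ell(w_0)$, matching $P_{e,w_0}=1$).

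Next I would bring in the structure of $\jc$. An element $z\in\jc$ has a unique left ascent $s_i$ and a unique right ascent $s_j$; equivalently $z s_k<z$ for all $k\ne j$ and $s_k z<z$ for all $k\ne i$, so in one-line notation $z$ is a shuffle of two decreasing runs (hence $123$-avoiding, with Robinson--Schensted shape $(2,1^{n-2})$), and $z$ is the longest element of its right coset $z W_{\hat\jmath}$ and of its left coset $W_{\hat\imath} z$, for the maximal parabolics $W_{\hat\imath}=\langle S\setminus\{s_i\}\rangle$ and $W_{\hat\jmath}=\langle S\setminus\{s_j\}\rangle$. The plan is to use maximality in $z W_{\hat\jmath}$ to factor $z=u\,w_{\hat\jmath}$ with $u$ a Grassmannian (single-descent) permutation and $w_{\hat\jmath}$ the longest element of $W_{\hat\jmath}$, and then to reduce $P_{e,z}$ to a parabolic Kazhdan--Lusztig polynomial for the Grassmannian $W/W_{\hat\jmath}$ evaluated at $u$. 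Maximality in the left coset $W_{\hat\imath} z$ should force $u$ into a very special (covexillary-type) class for which the Grassmannian Kazhdan--Lusztig polynomials admit an explicit lattice-path description with coefficients in $\{0,1\}$, and this is where $c_k\le 1$ ought to come out. As a reserve strategy I would instead induct over the $(n-1)\times(n-1)$ grid $\jc$, using Kazhdan--Lusztig star operations (the ``knight moves'' relating neighboring grid positions) to propagate the $\{0,1\}$-coefficient property from the corner elements, where $P_{e,z}=1$.

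The reduction in the first paragraph is routine given the cited deep theorems; the genuine difficulty is the coefficient bound. The point requiring real care is that Kazhdan--Lusztig polynomials have unbounded coefficients in general, and the bound is already nontrivial on $\jc$ (e.g.\ $P_{e,4231}=1+q$ in type $A_3$, so $L_{4231}$ really does occur twice, in two different degrees). Hence the argument must use the maximal-coset structure of $\jc$ essentially, rather than any soft positivity. Concretely, the main obstacle is to show that the parabolic reduction lands in a class of Grassmannian Schubert varieties whose local intersection cohomology is at most one-dimensional in each degree --- equivalently, that the relevant lattice-path count never exceeds $1$ --- and to confirm that this special class is exactly the one produced by the elements of $\jc$.
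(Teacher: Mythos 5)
You are attempting a statement the paper itself does not prove: it is imported verbatim from \cite[Proposition 12]{kmmBig}, so there is no internal proof to match, and your proposal must stand on its own. Your first paragraph is fine: by Beilinson--Ginzburg--Soergel Koszulity and the Kazhdan--Lusztig conjecture, the statement is equivalent to all coefficients of $P_{e,z}$ lying in $\{0,1\}$ for $z\in\jc$, with exactly the normalization you record. The gap is in the second paragraph, and it is not merely the (admitted) deferral of the key lemma --- the proposed mechanism is provably insufficient. Every property of $z$ you plan to use (unique left ascent $s_i$, unique right ascent $s_j$, maximality in the cosets $zW_{\hat{\jmath}}$ and $W_{\hat{\imath}}z$) fails to characterize $\jc$: these conditions describe exactly the set $\{bw_0 : b \text{ bigrassmannian}\}$, which has $\sum_j j(n-j)$ elements, strictly more than $|\jc|=(n-1)^2$, and your parenthetical deduction ``hence \ldots shape $(2,1^{n-2})$'' is false --- a concatenation of two decreasing runs only forces shape $(2^k,1^{n-2k})$. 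On the larger set the coefficient bound genuinely fails. Concretely, take $z=87436521\in\sbf_8$ (one-line notation, an involution of length $24$): it has unique left and right ascent $s_4$ and is maximal in both cosets for $W_{\hat{4}}$, so Deodhar's maximal-coset reduction gives $P_{e,z}=P_{w_{\hat{4}},z}$, the IH stalk at the base point of the Schubert variety $X=\{V\in\operatorname{Gr}(4,8)\ |\ \dim(V\cap F_4)\geq 2\}$ (partition $(4,4,2,2)$, a single Schubert condition of rank two). The Zelevinsky resolution through $\operatorname{Gr}(2,F_4)$ is small, with fiber $\operatorname{Gr}(2,4)$ over the worst point, whence
\[
P_{e,z}=1+q+2q^2+q^3+q^4, \qquad [\domver : L_z\langle 20\rangle]=2.
\]
Of course $z\notin\jc$: its Robinson--Schensted shape is $(2,2,1^4)$. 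So any correct argument must inject cell membership beyond the ascent/coset data, which your plan never does.

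The repair is to use what $z\in\jc$ actually gives: shape exactly $(2,1^{n-2})$, equivalently $zw_0$ is bigrassmannian whose associated rectangle is a single row or single column. Under your parabolic reduction this forces the unique Schubert condition to be of rank one ($d=1$, possibly after dualizing $\operatorname{Gr}(j,n)\cong\operatorname{Gr}(n-j,n)$); the small resolution then has a projective space as fiber over the most singular point, so $P_{e,z}=1+q+\cdots+q^{m-1}$ with $m=\min\{i,j,n-i,n-j\}$ --- coefficients in $\{0,1\}$, and in fact exactly the tetrahedral degree pattern $\tetracoord$ recorded in Section~\ref{s:tetra}. So the Grassmannian route does work, but only after this extra input; as written, your proposal rests on a false intermediate claim (and your example $4231$, where the rectangle is $1\times 1$, is too degenerate to detect the error). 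The reserve strategy via star operations is likewise unsubstantiated: star operations preserve cells but you give no mechanism by which they control individual coefficients of $P_{e,z}$ along the grid.
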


For the following result, we introduce the set $\JI$ of join-irreducible elements of the post $\sbf_n$.
An element in $w\in\sbf_n$ is called \emph{join-irreducible} if $w$ is not the join of some $U\subseteq \sbf_n\setminus\{w\}$.
It is shown in \cite[Section 4]{LS} that $\JI$ are exactly the elements in $W$ with unique left and right descents
. Such an element is called \emph{bigrassmannian}. 
Note that \cite{kmmBig} states everything in in terms of the bigrassmannian elements.

\begin{prop}\cite[Theorem 1]{kmmBig}\label{tetraJIA}
For $w\in W$ we have
\[\so_w \text{ is simple} \Longleftrightarrow w\in \JI .\]
Moreover, the assignment 
\begin{equation}\label{Phi}
\begin{split}
    \Psi: \JI &\to \{\text{the subquotients $L$ in $\domver$ such that $L\cong L_z$ for $z\in \jc$} \}\\
     w&\mapsto \so_w 
\end{split}
\end{equation}
 is a bijection.
\end{prop}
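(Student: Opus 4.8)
The plan is to prove the two assertions in tandem, using a reformulation of ``simple socle'' as a lattice-theoretic irreducibility condition. First I would record the elementary fact that for a finite length module $M$ and a submodule $N\subsetneq M$, the socle $\soc(M/N)$ is simple if and only if $N$ is meet-irreducible in the lattice $\mathcal L(M)$ of submodules of $M$: since $M$ has finite length, $\soc(M/N)$ is essential, so a splitting $\soc(M/N)=S_1\oplus S_2$ produces $N\subsetneq A_1,A_2$ with $A_1\cap A_2=N$, and conversely any proper decomposition $N=A_1\cap A_2$ yields two nonzero submodules of $M/N$ meeting trivially, precluding a simple socle. Applying this with $M=\domver$ and $N=\Delta_w$, the claim ``$\so_w$ simple $\iff w\in\JI$'' becomes ``$\Delta_w$ is meet-irreducible in $\mathcal L(\domver)$ $\iff$ $w$ is join-irreducible in $\sbf_n$''. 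Since Lascoux--Sch\"utzenberger identify $\JI$ with the bigrassmannian elements (unique left and unique right descent), the target is to match descent data with meet-irreducibility of $\Delta_w$.

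For the implication ``$w\notin\JI\Rightarrow\so_w$ not simple'' I would produce an explicit meet decomposition. If $w\neq e$ is not bigrassmannian it has two distinct descents on one side, say right descents $s\neq t$ (the left case is symmetric); then $\Delta_w\subsetneq\Delta_{ws}$ and $\Delta_w\subsetneq\Delta_{wt}$ by the poset isomorphism \eqref{posetiso}. The crux is the identity $\Delta_{ws}\cap\Delta_{wt}=\Delta_w$, which exhibits $\Delta_w$ as a proper intersection, hence not meet-irreducible. I would establish this by projecting onto the rank-two parabolic generated by $s$ and $t$ via translation functors, reducing to the dihedral (indeed $\mathfrak{sl}_3$) situation already seen in the $n=3$ example, where $\Delta_{\s\t}\cap\Delta_{\t\s}=\Delta_{w_0}$.

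For the converse together with the bijection I would exploit multiplicity-freeness. By Proposition~\ref{socinJ} every socle constituent of $\domver/\Delta_w$ is some $L_z$ with $z\in\jc$, and by Proposition~\ref{multfreeJ} each $L_z\langle d\rangle$ occurs at most once in $\domver$; thus the codomain of $\Psi$ is indexed by the pairs $(z,d)$ with $L_z\langle d\rangle$ a constituent, each in a single location. Injectivity follows by recovering $\Delta_w$ from $\so_w$: if $\so_w=L_z\langle d\rangle$, then this constituent occupies a unique spot and is the essential simple socle of $\domver/\Delta_w$, so $\so_w=\so_{w'}$ forces $\Delta_w=\Delta_{w'}$ and $w=w'$. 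For surjectivity I would show that for each constituent $L_z\langle d\rangle$ with $z\in\jc$ the largest submodule $N_{z,d}\subseteq\domver$ carrying it as an essential socle summand of the quotient is itself a Verma module $\Delta_w$; the simplicity of its socle then forces $w\in\JI$ by the previous paragraph, which also settles the remaining implication ``$\so_w$ simple $\Rightarrow w\in\JI$''.

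The main obstacle, shared by the second and third paragraphs, is the same: showing that the extremal submodules produced lattice-theoretically are genuine Verma modules --- that $\Delta_{ws}\cap\Delta_{wt}$ collapses to $\Delta_w$ and that each $N_{z,d}$ equals some $\Delta_w$. Equivalently, one must show that the combinatorial meets in $\Ver$ (joins in $\sbf_n$) at irreducible spots are computed by honest module intersections, so that meet-irreducibility in the full lattice $\mathcal L(\domver)$ coincides with join-irreducibility in $\sbf_n$. I expect to handle this not by abstract lattice theory, since the two irreducibility notions differ in general, but by direct homological input: translation functors reducing intersections to rank-two configurations, together with the grid structure of the penultimate cell $\jc$, each $z\in\jc$ lying one step below $w_0$, to pin down the degrees $d$ and the matching of descent data. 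This identification, rather than the surrounding formal scaffolding, is the technical heart of the argument.
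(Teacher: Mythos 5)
First, a point of order: the paper does not prove this proposition at all --- it is imported wholesale as \cite[Theorem 1]{kmmBig}, where it is established by a direct computation of $\soc(\domver/\Delta_w)$ using Kazhdan--Lusztig combinatorics of the penultimate cell and nontrivial functorial machinery. So your attempt is not paralleling an internal argument; it is an attempt to reprove the deepest imported input of the paper. Parts of your scaffolding are genuinely sound: the equivalence ``$\soc(M/N)$ simple $\iff$ $N$ meet-irreducible in the submodule lattice'' is correct, and your injectivity step is correct and completable --- by Proposition~\ref{multfreeJ}, for fixed $(z,d)$ the submodules $N\subseteq\domver$ with $[N:L_z\langle d\rangle]=0$ are closed under sums, so there is a unique largest one, $N_{z,d}$, and any $N$ with $\soc(\domver/N)=L_z\langle d\rangle$ must equal $N_{z,d}$.

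The genuine gap is that the two claims you yourself flag as the ``technical heart'' are, jointly, essentially the proposition itself, and the mechanism you propose for them would not work. For the identity $\Delta_{ws}\cap\Delta_{wt}=\Delta_w$: translation functors do not ``project onto the rank-two parabolic generated by $s$ and $t$'' --- they move between blocks of varying singularity for the \emph{same} algebra $\mathfrak{sl}_n$, and there is no exact functor $\cO_0(\mathfrak{sl}_n)\to\cO_0(\mathfrak{sl}_3)$ sending Vermas to Vermas and reflecting intersections of submodules of the dominant Verma (restriction to a Levi destroys finite length and all socle control; Zuckerman-type truncations are not exact). Moreover, any such soft, type-independent reduction should be viewed with suspicion on principle: the paper's first Remark, citing \cite{kmmJoin}, records that the equality \eqref{introtwo} fails in all types other than $A$ and the dihedral types, so the interaction between joins in the Bruhat order and intersections of Verma submodules genuinely requires type-$A$ input (the grid structure of $\jc$, which also pins down the shifts $d$), and cannot be reached by rank-two reductions alone.

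The same problem recurs in your surjectivity step: the assertion that each extremal submodule $N_{z,d}$ is a Verma module is given no argument, and nothing in Propositions~\ref{socinJ}, \ref{multfreeJ} or in \eqref{posetiso} implies it --- in other types the analogous statement is exactly what breaks. Note also that both of your deferred claims are immediate \emph{consequences} of Theorem~\ref{theoremattheend} (intersections of Vermas realize joins in $\al_n$, and the MacNeille completion preserves existing joins such as $ws\vee wt=w$), but that theorem depends on the present proposition, so invoking anything downstream would be circular. As it stands, your proposal is a clean formal reduction of \cite[Theorem 1]{kmmBig} to two unproved statements of essentially the same depth, not a proof; closing it would require the kind of explicit socle computation carried out in \cite{kmmBig}.
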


\subsection{Tetrahedral poset}\label{s:tetra}

The bijection \eqref{Phi} identifies $\JI$ with a combinatorial tetrahedron as follows. Let for $1\leq i,j\leq n-1$
\[d(i,j) = \min\{i,j,n-i,n-j\}-1\]
and for $0\leq k \leq d(i,j)$
\[g(i,j,k) = \frac{(n-1)(n-2)}{2}-|i-j|-2k.\]
Then consider the set
\[\tetracoord = \{(i,j,g(i,j,k))\in\mathbb Z^3 \ |\ 1\leq i,j\leq n-1, 0\leq k \leq d(i,j)\} .\] 
The set $\tetracoord$ is constructed so that the assignment from $\JI$ to $\tetracoord$ given by (using Proposition~\ref{tetraJIA})
$w\mapsto (i,j,g)$, where 
\[\so_w\cong L_z\langle g\rangle\] such that $z\in\jc$ is the element with unique left ascent $s_i$ and right ascent $s_j$, is bijective. See \cite[Section 4]{kmmBig} for details.

We make $\tetracoord$ into a poset generated by the relations
\[(i,j,g)\leq(i\pm 1,j,g+1), \quad (i,j,g)\leq(i,j\pm 1,g+1)\]
(whenever both sides belong to $\tetracoord$).
See Figure~\ref{fig_tetra} for an illustration where the edges represent the generating relations.

\begin{figure}[ht]
\centering
\tdplotsetmaincoords{110}{130} 
\begin{tikzpicture}[tdplot_main_coords, scale=1.4]

\filldraw[black] (1, 1, -1) circle (1.5pt) node[anchor=west] {\tiny $(1, 1, 1)$};
\filldraw[black] (1, 2, -2) circle (1.5pt) node[anchor=north] {\tiny $(1, 2, 2)$};
\filldraw[black] (2, 1, -2) circle (1.5pt) node[anchor=north] {\tiny $(2, 1, 2)$};
\filldraw[black] (2, 2, -1) circle (1.5pt) node[anchor=west] {\tiny $(2, 2, 1)$};

\draw (1, 1, -1) -- (1, 2, -2);
\draw (1, 1, -1) -- (2, 1, -2);
\draw (2, 2, -1) -- (1, 2, -2);
\draw (2, 2, -1) -- (2, 1, -2);

\draw[gray, dashed] (1, 1, -2) -- (1, 2, -2) -- (2, 2, -2) -- (2, 1, -2) -- cycle;
\draw[gray, dashed] (1, 1, -1) -- (1, 2, -1) -- (2, 2, -1) -- (2, 1, -1) -- cycle;
\draw[gray, dashed] (1, 1, -2) -- (1, 1, -1);
\draw[gray, dashed] (1, 2, -2) -- (1, 2, -1);
\draw[gray, dashed] (2, 2, -2) -- (2, 2, -1);
\draw[gray, dashed] (2, 1, -2) -- (2, 1, -1);
\draw[gray, dashed] (1, 2, -2) -- (2, 1, -2);
\draw[gray, dashed] (1, 1, -1) -- (2, 2, -1);

\end{tikzpicture}
\tdplotsetmaincoords{110}{130} 
\begin{tikzpicture}[tdplot_main_coords, scale=1.4]

\filldraw[black] (1, 1, -3) circle (1.5pt) node[anchor=west] {\tiny $(1, 1, 3)$};
\filldraw[black] (1, 2, -4) circle (1.5pt) node[anchor=west] {\tiny $(1, 2, 4)$};
\filldraw[black] (1, 3, -5) circle (1.5pt) node[anchor=north] {\tiny $(1, 3, 5)$};
\filldraw[black] (2, 1, -4) circle (1.5pt) node[anchor=east] {\tiny $(2, 1, 4)$};
\filldraw[black] (2, 2, -5) circle (1.5pt) node[anchor=north] {\tiny $(2, 2, 5)$};
\filldraw[black] (2, 2, -3) circle (1.5pt) node[anchor=west] {\tiny $(2, 2, 3)$};
\filldraw[black] (2, 3, -4) circle (1.5pt) node[anchor=west] {\tiny $(2, 3, 4)$};
\filldraw[black] (3, 1, -5) circle (1.5pt) node[anchor=north] {\tiny $(3, 1, 5)$};
\filldraw[black] (3, 2, -4) circle (1.5pt) node[anchor=east] {\tiny $(3, 2, 4)$};
\filldraw[black] (3, 3, -3) circle (1.5pt) node[anchor=west] {\tiny $(3, 3, 3)$};

\draw (1, 1, -3) -- (1, 2, -4);
\draw (1, 1, -3) -- (2, 1, -4);
\draw (1, 2, -4) -- (1, 3, -5);
\draw (1, 2, -4) -- (2, 2, -5);
\draw (2, 1, -4) -- (2, 2, -5);
\draw (2, 1, -4) -- (3, 1, -5);
\draw (2, 2, -3) -- (1, 2, -4);
\draw (2, 2, -3) -- (2, 1, -4);
\draw (2, 2, -3) -- (2, 3, -4);
\draw (2, 2, -3) -- (3, 2, -4);
\draw (2, 3, -4) -- (1, 3, -5);
\draw (2, 3, -4) -- (2, 2, -5);
\draw (3, 2, -4) -- (2, 2, -5);
\draw (3, 2, -4) -- (3, 1, -5);
\draw (3, 3, -3) -- (2, 3, -4);
\draw (3, 3, -3) -- (3, 2, -4);

\draw[gray, dashed] (1, 1, -5) -- (1, 3, -5) -- (3, 3, -5) -- (3, 1, -5) -- cycle;
\draw[gray, dashed] (1, 1, -3) -- (1, 3, -3) -- (3, 3, -3) -- (3, 1, -3) -- cycle;
\draw[gray, dashed] (1, 1, -5) -- (1, 1, -3);
\draw[gray, dashed] (1, 3, -5) -- (1, 3, -3);
\draw[gray, dashed] (3, 3, -5) -- (3, 3, -3);
\draw[gray, dashed] (3, 1, -5) -- (3, 1, -3);
\draw[gray, dashed] (1, 3, -5) -- (3, 1, -5);
\draw[gray, dashed] (1, 1, -3) -- (3, 3, -3);

\end{tikzpicture}
%
\tdplotsetmaincoords{110}{130} 
\begin{tikzpicture}[tdplot_main_coords, scale=1.4]

\filldraw[black] (1, 1, -6) circle (1.5pt) node[anchor=west] {};
\filldraw[black] (1, 2, -7) circle (1.5pt) node[anchor=west] {};
\filldraw[black] (1, 3, -8) circle (1.5pt) node[anchor=west] {};
\filldraw[black] (1, 4, -9) circle (1.5pt) node[anchor=west] {};
\filldraw[black] (2, 1, -7) circle (1.5pt) node[anchor=west] {};
\filldraw[black] (2, 2, -8) circle (1.5pt) node[anchor=west] {};
\filldraw[black] (2, 2, -6) circle (1.5pt) node[anchor=west] {};
\filldraw[black] (2, 3, -9) circle (1.5pt) node[anchor=west] {};
\filldraw[black] (2, 3, -7) circle (1.5pt) node[anchor=west] {};
\filldraw[black] (2, 4, -8) circle (1.5pt) node[anchor=west] {};
\filldraw[black] (3, 1, -8) circle (1.5pt) node[anchor=west] {};
\filldraw[black] (3, 2, -9) circle (1.5pt) node[anchor=west] {};
\filldraw[black] (3, 2, -7) circle (1.5pt) node[anchor=west] {};
\filldraw[black] (3, 3, -8) circle (1.5pt) node[anchor=west] {};
\filldraw[black] (3, 3, -6) circle (1.5pt) node[anchor=west] {};
\filldraw[black] (3, 4, -7) circle (1.5pt) node[anchor=west] {};
\filldraw[black] (4, 1, -9) circle (1.5pt) node[anchor=west] {};
\filldraw[black] (4, 2, -8) circle (1.5pt) node[anchor=west] {};
\filldraw[black] (4, 3, -7) circle (1.5pt) node[anchor=west] {};
\filldraw[black] (4, 4, -6) circle (1.5pt) node[anchor=west] {};

\draw (1, 1, -6) -- (1, 2, -7);
\draw (1, 1, -6) -- (2, 1, -7);
\draw (1, 2, -7) -- (1, 3, -8);
\draw (1, 2, -7) -- (2, 2, -8);
\draw (1, 3, -8) -- (1, 4, -9);
\draw (1, 3, -8) -- (2, 3, -9);
\draw (2, 1, -7) -- (2, 2, -8);
\draw (2, 1, -7) -- (3, 1, -8);
\draw (2, 2, -8) -- (2, 3, -9);
\draw (2, 2, -8) -- (3, 2, -9);
\draw (2, 2, -6) -- (1, 2, -7);
\draw (2, 2, -6) -- (2, 1, -7);
\draw (2, 2, -6) -- (2, 3, -7);
\draw (2, 2, -6) -- (3, 2, -7);
\draw (2, 3, -7) -- (1, 3, -8);
\draw (2, 3, -7) -- (2, 2, -8);
\draw (2, 3, -7) -- (2, 4, -8);
\draw (2, 3, -7) -- (3, 3, -8);
\draw (2, 4, -8) -- (1, 4, -9);
\draw (2, 4, -8) -- (2, 3, -9);
\draw (3, 1, -8) -- (3, 2, -9);
\draw (3, 1, -8) -- (4, 1, -9);
\draw (3, 2, -7) -- (2, 2, -8);
\draw (3, 2, -7) -- (3, 1, -8);
\draw (3, 2, -7) -- (3, 3, -8);
\draw (3, 2, -7) -- (4, 2, -8);
\draw (3, 3, -8) -- (2, 3, -9);
\draw (3, 3, -8) -- (3, 2, -9);
\draw (3, 3, -6) -- (2, 3, -7);
\draw (3, 3, -6) -- (3, 2, -7);
\draw (3, 3, -6) -- (3, 4, -7);
\draw (3, 3, -6) -- (4, 3, -7);
\draw (3, 4, -7) -- (2, 4, -8);
\draw (3, 4, -7) -- (3, 3, -8);
\draw (4, 2, -8) -- (3, 2, -9);
\draw (4, 2, -8) -- (4, 1, -9);
\draw (4, 3, -7) -- (3, 3, -8);
\draw (4, 3, -7) -- (4, 2, -8);
\draw (4, 4, -6) -- (3, 4, -7);
\draw (4, 4, -6) -- (4, 3, -7);

\draw[gray, dashed] (1, 1, -9) -- (1, 4, -9) -- (4, 4, -9) -- (4, 1, -9) -- cycle;
\draw[gray, dashed] (1, 1, -6) -- (1, 4, -6) -- (4, 4, -6) -- (4, 1, -6) -- cycle;
\draw[gray, dashed] (1, 1, -9) -- (1, 1, -6);
\draw[gray, dashed] (1, 4, -9) -- (1, 4, -6);
\draw[gray, dashed] (4, 4, -9) -- (4, 4, -6);
\draw[gray, dashed] (4, 1, -9) -- (4, 1, -6);
\draw[gray, dashed] (1, 4, -9) -- (4, 1, -9);
\draw[gray, dashed] (1, 1, -6) -- (4, 4, -6);

\end{tikzpicture}
\caption{The poset $\tetracoord$ for $n=3,4,5$.}
\label{fig_tetra}
\end{figure}

\begin{prop}\label{JI=tetracoord}
We have
\[\JI \cong \tetracoord\]
as posets via the assignment above.
\end{prop}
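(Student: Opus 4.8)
The assignment $w\mapsto(i,j,g)$ is already a bijection $\JI\to\tetracoord$, so the entire content of the proposition is that it and its inverse preserve order; write $\phi\colon\JI\to\tetracoord$ for this bijection. Since any finite poset is the transitive closure of its covering relations, and $\tetracoord$ is by construction the transitive closure of the generating relations $(i,j,g)\le(i\pm1,j,g+1)$ and $(i,j,g)\le(i,j\pm1,g+1)$, it suffices to prove that $\phi$ carries the Bruhat covers of $\JI$ bijectively onto these generating relations. The plan therefore has two parts: translate the representation-theoretic coordinates into combinatorial ones, and then match covers.

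For the combinatorial side I would use the classical description of the bigrassmannian elements from \cite{LS}. Every $w\in\JI$ has a single Fulton essential box; writing $(p,q)$ for its position and $v=r(p,q)$ for the value there of the rank function $r(k,l)=\#\{a\le k:w(a)\le l\}$, the element $w=w_{(p,q,v)}$ is the Bruhat-smallest permutation with $r(p,q)\le v$, the parameters ranging over $1\le p,q\le n-1$ and $\max(0,p+q-n)\le v\le\min(p,q)-1$. Comparing rank functions turns the Bruhat order into the explicit rule $w_{(p,q,v)}\le w_{(p',q',v')}$ if and only if $v\ge v'+(p-p')^{+}+(q-q')^{+}$, where $(x)^{+}=\max(x,0)$. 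A short calculation with this rule shows that $p+q-2v$ increases by exactly $1$ along every cover and that the covers of $(\JI,\le)$ are precisely the four moves sending $(p,q,v)$ to $(p+1,q,v)$, $(p,q+1,v)$, $(p-1,q,v-1)$, or $(p,q-1,v-1)$ (whenever the target is a valid triple).

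The crux is the dictionary between $(p,q,v)$ and $(i,j,g)$. Here I would show that the cell element $z\in\jc$ with $\so_{w}\cong L_z\langle g\rangle$ has left ascent $s_{q}$ and right ascent $s_{p}$, so that $i=q$ and $j=p$, and that the socle degree $g$ depends on $(p,q,v)$ only through $p+q-2v$, affinely and with slope $+1$. Granting this, each of the four covers above becomes a unit step in $i$ or in $j$ together with $g\mapsto g+1$, that is, exactly a generating relation of $\tetracoord$, and conversely each generating relation pulls back to one of the four covers. As $\phi$ then matches covers with generating relations and both orders are the transitive closures of these, $\phi$ is a poset isomorphism.

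The main obstacle is precisely this dictionary, because $(i,j,g)$ is defined through the graded socle of $\Delta_e/\Delta_w$ — a Kazhdan–Lusztig-theoretic datum — whereas the essential box is purely combinatorial; bridging the two is supplied by the analysis of \cite[Section 4]{kmmBig}, on which I would rely. A clean way to organize this step is the reformulation that, for $w\in\JI$ with datum $(z,g)$, the submodule $\Delta_w$ is the largest submodule $M\subseteq\Delta_e$ such that $L_z\langle g\rangle$ is not a composition factor of $M$ (this uses that $\so_w$ is simple by Proposition~\ref{tetraJIA} and that $[\Delta_e:L_z\langle g\rangle]\le1$ by Proposition~\ref{multfreeJ}). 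Consequently $w\le w'$ holds if and only if $L_z\langle g\rangle$ is a composition factor of $\Delta_e/\Delta_{w'}$, so the proposition is equivalent to the statement that the penultimate-cell composition factors of $\Delta_e/\Delta_{w'}$ are exactly the points of the principal down-set of $\phi(w')$ in $\tetracoord$ — the form in which the socle and grading computations of \cite{kmmBig} apply most directly.
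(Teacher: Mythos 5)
Your proposal is correct, but it is worth knowing that the paper's own ``proof'' of Proposition~\ref{JI=tetracoord} is a one-line citation to \cite[Sections 4.1, 4.2]{kmmBig}; you have effectively reconstructed the argument that lives in the cited source. What you do differently from the paper: you rebuild the combinatorial half explicitly --- the Lascoux--Sch\"utzenberger essential-box parameterization $w_{(p,q,v)}$, the order rule $w_{(p,q,v)}\le w_{(p',q',v')}$ iff $v\ge v'+(p-p')^{+}+(q-q')^{+}$ (which indeed follows from the ``base'' property that $w_{(p,q,v)}\le y$ iff $r_y(p,q)\le v$, together with $v<\min(p,q)$), and the identification of the covers with the four unit moves, which then match the generating relations of $\tetracoord$ because $p+q-2v$ increases by exactly $1$ along each cover. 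I spot-checked these claims and your transposed dictionary $i=q$, $j=p$: for $n=3$, $\s\t=231$ has essential box $(p,q)=(2,1)$ with $v=0$, and $\soc(\Delta_e/\Delta_{\s\t})\cong L_{\t\s}\langle 2\rangle$ with $\t\s$ having unique left ascent $s_1$ and right ascent $s_2$, i.e.\ $(i,j,g)=(1,2,2)$, consistent with Figure~\ref{fig_tetra}. The one genuinely representation-theoretic input --- which pair $(z,g)$ is attached to $w_{(p,q,v)}$ --- you correctly isolate as the crux and defer to \cite[Section 4]{kmmBig}; since that is precisely the same external dependence the paper itself has, your proof is no less complete than the paper's, and more verifiable locally. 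Your closing reformulation is also sound and attractive: since $\so_w\cong L_z\langle g\rangle$ is simple (Proposition~\ref{tetraJIA}) and $[\Delta_e:L_z\langle g\rangle]\le 1$ (Proposition~\ref{multfreeJ}), $\Delta_w$ is the largest submodule of $\Delta_e$ avoiding $L_z\langle g\rangle$ as a composition factor, whence $w\le w'$ iff $L_z\langle g\rangle$ occurs in $\Delta_e/\Delta_{w'}$; this is in effect the mechanism behind Proposition~\ref{shadow to JIideal}, so your route also exposes the link between the two propositions that the paper keeps implicit. One cosmetic caveat: your ``slope $+1$'' normalization of $g$ agrees with Figure~\ref{fig_tetra}, where $g$ grows like $\frac{(n-1)(n-2)}{2}+|i-j|+2k$, rather than with the paper's displayed formula $g(i,j,k)=\frac{(n-1)(n-2)}{2}-|i-j|-2k$, whose signs appear inconsistent with the paper's own figure; this discrepancy is in the paper, not in your argument, and does not affect the isomorphism.
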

\begin{proof}
This is explained in \cite[Section 4.1, 4.2]{kmmBig} (note again that $\JI$ is the set of bigrassmannian elements in $\sbf_n$).
\end{proof}



\section{Comparing subquotients}

Statements in the following section involve taking the sum of certain subquotients in $\domver$.
Here we give a definition to make such statements more precise.

\begin{defn}\label{subsum}
Let $U$ be a set and let $M,N,N_u\in \cO$, for $u\in U$, with $N,N_u\subseteq M$.
\begin{enumerate}
    \item 
We say that \emph{the socle of $M/N$ is contained in the sum of the socles of $M/N_u$} if 
\begin{itemize}
  \item for each $u\in U$, we have $N\subseteq N_u$ (Note that this induces the map
\begin{equation*}\label{sotoso}
    \phi_u:\soc (M/N)\to \soc(M/N_u)
\end{equation*}
for each $u\in U$);
\item\label{3}  we have 
\[\bigcap_{u\in U} \ker \phi_u=0.\]
\end{itemize}
\item
We say that \emph{the socle of $M/N$ contains the sum of the socles of $M/N_u$} if
\begin{itemize}
  \item for each $u\in U$, we have $N\subseteq N_u$ (and the induced map \eqref{sotoso});
\item\label{2} each $\phi_u$ is surjective.
\end{itemize}
\item\label{agrees}
We say that \emph{the socle of $M/N$ agrees with the sum of the socles of $M/N_u$} if it contains and is contained in the socles of $M/N_u$.
\end{enumerate}
\end{defn}

We shall also use the usual (in)equalities when comparing the subquotients of $M$, e.g., write
\begin{equation}
\soc(M/N)  = \sum_{u\in U}\soc(M/N_u) 
\end{equation}
for Definition~\ref{subsum}~\eqref{agrees}.

\section{Submodules and subquotients of the dominant Verma module}\label{s:sub}

Let us consider the poset of (graded) submodules of the dominant Verma module
\[\sub = (\{M\in \cat \ |\ M\subseteq \domver\},\supseteq)\]
and the subposet 
\begin{equation}
\begin{split}
    \shadows
    = \{M\in \sub\ |\ [\soc(\domver / M): L_z] =0 \text{ for $z\not\in \jc$}\}.
\end{split}
\end{equation}
Also consider
\[\tetra =\{M\in\sub\ |\ \soc(\domver / M) \cong L_z \text{ for $z\in\jc$}\} =\{M\in\shadows\ |\ \soc(\domver/ M) \text{ is simple}\}.\]



\begin{prop}\label{tetra=BG}
We have
\begin{equation}\label{tetra to JI}
\tetra=\{\Delta_x\in\sub\ |\ x\in \JI \}\cong (\JI,\leq)\subset \sbf_n,    
\end{equation}
where the isomorphism is given by $\Delta_x\mapsto x$.
\end{prop}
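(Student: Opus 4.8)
The plan is to establish the set-level equality $\tetra=\{\Delta_x\mid x\in\JI\}$ first, and then read off the poset isomorphism from the already-known isomorphism \eqref{posetiso}. The inclusion $\{\Delta_x\mid x\in\JI\}\subseteq\tetra$ is the easy direction: for $x\in\JI$, Proposition~\ref{tetraJIA} gives that $\soc(\domver/\Delta_x)$ is simple, and Proposition~\ref{socinJ} forces this simple module to be isomorphic to some $L_z$ with $z\in\jc$, so $\Delta_x$ lies in $\tetra$ by definition. The substance of the proposition is the reverse inclusion, i.e.\ that \emph{every} $M\in\tetra$ is a Verma submodule indexed by a join-irreducible element.

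For that direction I would take $M\in\tetra$ and set $L:=\soc(\domver/M)$, a specific simple subquotient of $\domver$ with $L\cong L_z\langle d\rangle$ for some $z\in\jc$ and $d\in\mathbb Z$. The key input is Proposition~\ref{multfreeJ}: it gives $[\domver:L]\le 1$, and since $L$ occurs in the quotient $\domver/M$ it occurs exactly once in $\domver$. Using the bijectivity of $\Psi$ from Proposition~\ref{tetraJIA}, there is a unique $x\in\JI$ whose associated subquotient $\soc(\domver/\Delta_x)$ is literally this same $L$ (multiplicity-freeness lets us identify a subquotient isomorphic to $L_z$ with the pair $(z,d)$, so ``same isomorphism type and position'' is unambiguous). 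Everything then reduces to proving $M=\Delta_x$.

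The hard part is this last rigidity step, and I would isolate it as a self-contained claim: if $N_1,N_2\subseteq\domver$ both satisfy $\soc(\domver/N_i)=L$ for a subquotient $L$ with $[\domver:L]=1$, then $N_1=N_2$. To prove it, note that the embedding $\domver/(N_1\cap N_2)\hookrightarrow\domver/N_1\oplus\domver/N_2$ realizes $\soc(\domver/(N_1\cap N_2))$ as a nonzero submodule of $L\oplus L$; since $L$ occurs at most once in the quotient $\domver/(N_1\cap N_2)$, this socle is exactly $L$. Now $N_1/(N_1\cap N_2)\hookrightarrow\domver/(N_1\cap N_2)$ has socle contained in $L$, so were it nonzero it would exhibit $L$ as a composition factor of $N_1$; combined with $L\subseteq\soc(\domver/N_1)$ this would give $[\domver:L]\ge 2$, a contradiction. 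Hence $N_1\subseteq N_2$, and symmetrically $N_1=N_2$. Applying the claim to $N_1=M$ and $N_2=\Delta_x$ yields $M=\Delta_x$, finishing the inclusion $\tetra\subseteq\{\Delta_x\mid x\in\JI\}$, and the distinctness of the $\Delta_x$ (together with injectivity of $\Psi$) makes $x\mapsto\Delta_x$ a bijection.

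Finally, the poset isomorphism is inherited directly from \eqref{posetiso}: the order-reversing bijection $\Delta_w\leftrightarrow w$ witnessing $\Ver\cong\sbf_n$ restricts to the subset indexed by $\JI$ and gives $\Delta_x\supseteq\Delta_{x'}\Longleftrightarrow x\le x'$, which is precisely $(\{\Delta_x\mid x\in\JI\},\supseteq)\cong(\JI,\le)$. I expect the only real obstacle to be the rigidity step in the reverse inclusion, namely showing that the isomorphism type and position of $\soc(\domver/M)$ pin down $M$ exactly; this rests entirely on the multiplicity-one statement of Proposition~\ref{multfreeJ}, without which the intersection argument would break down.
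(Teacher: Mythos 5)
Your proposal is correct, and it is in fact more complete than the paper's own argument, which disposes of the proposition in one line: the set equality is attributed to Proposition~\ref{tetraJIA} (i.e.\ to the cited result of \cite{kmmBig}) and the order isomorphism to \eqref{posetiso}. Your treatment of the easy inclusion $\{\Delta_x\mid x\in\JI\}\subseteq\tetra$ and of the poset statement coincides with the paper's. What you add is a genuine argument for the reverse inclusion, which Proposition~\ref{tetraJIA} as stated does not literally give: that proposition characterizes when $\soc(\domver/\Delta_w)$ is simple and says $\Psi$ is a bijection onto subquotients, but it does not by itself say that an \emph{arbitrary} $M\in\sub$ with simple socle-quotient in $\jc$ must be a Verma submodule. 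Your rigidity lemma --- two submodules $N_1,N_2\subseteq\domver$ whose quotients have the same multiplicity-one simple socle coincide, proved via the diagonal embedding $\domver/(N_1\cap N_2)\hookrightarrow\domver/N_1\oplus\domver/N_2$ and counting occurrences of $L$ with Proposition~\ref{multfreeJ} --- is exactly the missing step, and your insistence on tracking graded shifts is essential, since Proposition~\ref{multfreeJ} is a graded statement and $L_z$ can occur in $\domver$ in several degrees (as $\tetracoord$ shows for fixed $(i,j)$ and varying $k$). Note also that the same rigidity, in its semisimple-socle form, is tacitly invoked later in the paper, in the proof of Proposition~\ref{shadows=intver}, where equality of socles is asserted to imply $M=\bigcap_{x\in U}\Delta_x$; your lemma is what justifies that step as well.
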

\begin{proof}
The first equality follows from Proposition~\ref{tetraJIA}
and the poset isomorphism follows from \eqref{posetiso}. 
\end{proof}


\begin{prop}\label{max=soc}
Suppose $U\subset \tetra$ consists of mutually incomparable elements in $\tetra$. Then we have 
\[\soc (\domver/\cap_{M\in U} M) = \sum_{M\in U}\soc(\domver / M) \cong \bigoplus_{M\in U}\soc(\domver / M) .\]
\end{prop}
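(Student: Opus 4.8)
The plan is to compare $\soc(\domver/N)$, where $N=\bigcap_{M\in U}M$, with the individual socles $\soc(\domver/M)$ through the canonical embedding $\domver/N\hookrightarrow\bigoplus_{M\in U}\domver/M$ arising from $N=\bigcap_{M\in U}M$. Writing $U=\{\Delta_x\mid x\in A\}$ for an antichain $A\subseteq\JI$ (Proposition~\ref{tetra=BG}) and $c_x:=\soc(\domver/\Delta_x)$, Propositions~\ref{tetraJIA} and~\ref{multfreeJ} tell us that the $c_x$ are pairwise non-isomorphic simple modules, each occurring with multiplicity one in $\domver$. Since the socle of a submodule is its intersection with the socle of the ambient module, and $\soc\bigl(\bigoplus_x\domver/\Delta_x\bigr)=\bigoplus_x c_x$, the embedding gives $\soc(\domver/N)\subseteq\bigoplus_{x\in A}c_x$, a submodule of a multiplicity-free semisimple module. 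This already yields the containment half of Definition~\ref{subsum}: the maps $\phi_x$ are the coordinate projections, so $\bigcap_x\ker\phi_x=0$. It also reduces both the ``contains'' half and the direct-sum claim to showing that each projection $\phi_{x_0}\colon\soc(\domver/N)\to c_{x_0}$ is surjective, for then a submodule of a multiplicity-free $\bigoplus_{x\in A}c_x$ surjecting onto every summand must be the whole module.

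The key tool for surjectivity is the following characterization, which I would prove first: for $x\in\JI$ and any submodule $M\subseteq\domver$, one has $M\subseteq\Delta_x$ if and only if $c_x$ is not a composition factor of $M$. The forward direction is immediate, since $[\Delta_x:c_x]=0$ (as $c_x$ is the simple socle of $\domver/\Delta_x$ and has multiplicity one in $\domver$). For the converse, note that $\domver/\Delta_x$ has simple socle $c_x$; if $M\not\subseteq\Delta_x$ then the nonzero image of $M$ in $\domver/\Delta_x$ contains that socle, and comparing multiplicities across $0\to M\cap\Delta_x\to M\oplus\Delta_x\to M+\Delta_x\to0$ together with $[\Delta_x:c_x]=0$ forces $[M:c_x]\neq0$.

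Next I would fix $x_0\in A$, set $N_0=\bigcap_{x\in A\setminus\{x_0\}}\Delta_x$, and show $N_0\not\subseteq\Delta_{x_0}$. For each $x\in A\setminus\{x_0\}$ the antichain hypothesis gives $x_0\not\leq x$, i.e.\ $\Delta_x\not\subseteq\Delta_{x_0}$, so by the characterization $c_{x_0}$ is a composition factor of $\Delta_x$. Because $c_{x_0}$ has multiplicity one in $\domver$, the exact sequence $0\to M_1\cap M_2\to M_1\oplus M_2\to M_1+M_2\to0$ shows that ``$c_{x_0}$ occurs as a composition factor'' is preserved under intersection of submodules of $\domver$; inducting over $A\setminus\{x_0\}$ gives $[N_0:c_{x_0}]=1$, whence $N_0\not\subseteq\Delta_{x_0}$ by the characterization again. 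Finally, since $\domver/\Delta_{x_0}$ has simple socle $c_{x_0}$, the nonzero image of $N_0$ in $\domver/\Delta_{x_0}$ contains $c_{x_0}$, so I may choose $v\in N_0$ with $0\neq v+\Delta_{x_0}\in c_{x_0}$; as $v\in\Delta_x$ for every $x\neq x_0$, the class $v+N$ maps into $\soc\bigl(\bigoplus_x\domver/\Delta_x\bigr)$ supported at the $x_0$-coordinate, so $v+N\in\soc(\domver/N)$ and $\phi_{x_0}(v+N)\neq0$. This gives surjectivity, and hence all three assertions.

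I expect the main obstacle to be exactly this surjectivity of the $\phi_{x_0}$, equivalently the statement $N_0\not\subseteq\Delta_{x_0}$. The naive hope that $c_{x_0}$ simply fails to be a composition factor of each individual $\Delta_x$ with $x\neq x_0$ is false (already for $n=3$, the factor $c_{s_1s_2}$ is the head of $\Delta_{s_2s_1}$), so the argument cannot work factor-by-factor and must instead locate the unique copy of $c_{x_0}$ \emph{inside the intersection}. The clean resolution is the interplay between the submodule characterization above and multiplicity one (Proposition~\ref{multfreeJ}), which together make ``containing $c_{x_0}$'' a condition stable under intersection. The one point to watch is that incomparability is used only through $x_0\not\leq x$, so I must invoke the antichain hypothesis correctly for each fixed $x_0$.
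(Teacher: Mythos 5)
Your proof is correct, and while it shares the paper's overall frame---reduce, via multiplicity-freeness (Proposition~\ref{multfreeJ}) and the simplicity of each $\soc(\domver/\Delta_x)$ (Proposition~\ref{tetraJIA}), to verifying the two halves of Definition~\ref{subsum}, with the kernel condition easy and surjectivity of the maps $\phi_{x_0}$ the crux---your treatment of the crux is genuinely different. The paper argues by contradiction: if $\phi_N=0$ for some $N\in U$, it asserts (in one compressed sentence, without detailed justification) that some socle component $L$ of $\domver/\bigcap_{M\in U}M$ is contained in every submodule of $\domver$ containing $\soc(\domver/N)$, identifies $L=\soc(\domver/N')$ for some $N'\in U$ using multiplicity one, and concludes $N'\subset N$, i.e.\ $N'>N$ in $\tetra$, contradicting incomparability. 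You instead prove surjectivity directly, through the characterization that for $x\in\JI$ a submodule $M\subseteq\domver$ satisfies $M\subseteq\Delta_x$ if and only if $c_x=\soc(\domver/\Delta_x)$ is not a composition factor of $M$, and then show $\bigcap_{x\in A\setminus\{x_0\}}\Delta_x\not\subseteq\Delta_{x_0}$ by propagating the unique copy of $c_{x_0}$ through intersections via the sequence $0\to M_1\cap M_2\to M_1\oplus M_2\to M_1+M_2\to 0$. Your characterization lemma is essentially where the paper's implicit assertion lives---it says $\Delta_x$ is the largest submodule of $\domver$ not containing the composition factor $c_x$---so your argument can be read as supplying the details the paper leaves terse, but packaged as a positive statement rather than a contradiction. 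What the paper's route buys is brevity; what yours buys is a self-contained, reusable lemma, an explicit socle element witnessing $\phi_{x_0}\neq 0$, and a transparent localization of where the antichain hypothesis enters (only through $x\not\geq x_0$ for each $x\neq x_0$, exactly as you flag). All supporting steps check out, including the graded multiplicity bookkeeping (distinct $x\in\JI$ give pairwise non-isomorphic graded simples $c_x$ by Propositions~\ref{tetraJIA} and~\ref{multfreeJ}), the identity $\soc A = A\cap \soc B$ for $A\subseteq B$, and the closing observation that a submodule of the multiplicity-free semisimple module $\bigoplus_{x\in A}c_x$ surjecting onto each summand must be the whole module; your $n=3$ example ($c_{s_1s_2}$ being the head of $\Delta_{s_2s_1}$) is also accurate and correctly explains why a factor-by-factor vanishing argument cannot work.
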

\begin{proof}
Since each $\soc(\domver/ M)$ is simple and is a full isotypic component in (graded) $\domver$, it is enough to prove the first equality.
For that, we need to consider for each $N\in U$ the map $\phi_N:\soc (\domver/\cap_{M\in U} M)\to \soc(\domver/N)$ induced by $\bigcap_{M\in U} M\inj N$. We need to show that we have $\bigcap_{M\in U}\ker\phi_M=0$ and that each $\phi_N$ is surjective.

If $\bigcap_{M\in U}\ker\phi_M\neq 0$ then there is a simple submodule $L\subset \domver / \cap_{M\in U}M$ so that $\phi_N(L)=0$ for all $N\in U$. 
The latter means that the image of $L$ is zero under the quotient $\domver\to\domver/\cap_{M\in U}M$ which contradicts $L\subset \domver / \cap_{M\in U}M$.
This proves $\bigcap_{M\in U}\ker\phi_M = 0$.

To show the surjectivity, note that each $\soc(\domver/N)$ is simple, and thus we need only show all $\phi_N$ are nonzero. 
Suppose not, and take $N\in U$ such that $\phi_N=0$. 
There is a socle component $L\subseteq \soc \domver / \cap_{M\in U}M$ which is contained in all submodules of $\domver$ containing $\soc(\domver / N)$. 
By the above paragraph and Proposition~\ref{multfreeJ}, we have $L=\soc(\domver / N')$ for some $N'\in U$. 
Note that $N' > N$ (i.e., $N'\subset N$) by construction of $L$.
But this implies that $U$ contains comparable elements, which contradicts the assumption on $U$. 
This proves the surjectivity.
\end{proof}

\begin{prop}\label{shadows=intver}
We have
\[\shadows = \{ \bigcap_{x\in U} \Delta_x \in\sub \ |\ U\subseteq W\} = \overline{\Ver}.\]
\end{prop}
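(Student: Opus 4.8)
The second equality is essentially definitional: every module $\bigcap_{x\in U}\Delta_x$ is a submodule of $\domver=\Delta_e$ (with $U=\emptyset$ recovering $\domver$ itself as the empty intersection), so the set on the left of the second equality is precisely the underlying set of $\overline{\Ver}$ as defined in Theorem~\ref{mainthm}. The real content is the first equality, which I would prove by two inclusions. For $\{\bigcap_{x\in U}\Delta_x\}\subseteq\shadows$, fix $U\subseteq W$ and put $M=\bigcap_{x\in U}\Delta_x$. The canonical map $\domver/M\inj\bigoplus_{x\in U}\domver/\Delta_x$ is injective because $M$ is the intersection of the $\Delta_x$, and $W=\sbf_n$ is finite. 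Since the socle of a submodule embeds in the socle of the ambient module and socle commutes with finite direct sums, I obtain $\soc(\domver/M)\subseteq\bigoplus_{x\in U}\so_x$. By Proposition~\ref{socinJ} each $\so_x$ involves only simples $L_z$ with $z\in\jc$, so $[\soc(\domver/M):L_z]=0$ for $z\notin\jc$, i.e. $M\in\shadows$.

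For the reverse inclusion the key is a lemma locating each multiplicity-one subquotient. Fix $w\in\JI$ and set $V=\so_w$; by Proposition~\ref{tetraJIA} this is simple, of the form $L_z\langle d\rangle$ with $z\in\jc$, so by Proposition~\ref{multfreeJ} it occurs in $\domver$ with multiplicity one. I claim $\Delta_w$ is the largest submodule of $\domver$ not having $V$ as a composition factor. First, $\Delta_w$ itself avoids $V$: since $V=\soc(\domver/\Delta_w)$ is a factor of the quotient and $[\domver:V]=1$, we get $[\Delta_w:V]=0$. Second, the submodules of $\domver$ avoiding $V$ are closed under sums, because a sum of submodules is a quotient of their direct sum and hence has its composition factors among those of the summands; writing $\domver_{>V}$ for the (finite) sum of all of them, we have $\Delta_w\subseteq\domver_{>V}$. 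Then $\domver_{>V}/\Delta_w$ has no composition factor $V$, yet it embeds into $\domver/\Delta_w$, whose socle is the simple module $V$; this forces $\domver_{>V}/\Delta_w=0$, i.e. $\domver_{>V}=\Delta_w$. Since $[M:V]+[\domver/M:V]=1$ for every submodule $M$, the upshot is that $M\subseteq\Delta_w$ exactly when $V$ is a composition factor of $\domver/M$ (equivalently, not of $M$).

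Now I would take $M\in\shadows$, set $U=\{x\in W\ |\ M\subseteq\Delta_x\}$ and $N=\bigcap_{x\in U}\Delta_x\supseteq M$, and show $N=M$. If $N\neq M$, then $N/M\subseteq\domver/M$ is nonzero, so it has a simple socle constituent $L=Q/M$ with $M\subsetneq Q\subseteq N$; as $M\in\shadows$ we have $L\cong L_z\langle d\rangle$ with $z\in\jc$, and by the bijection in Proposition~\ref{tetraJIA} there is $w\in\JI$ with $\so_w\cong L=:V$. Since $V$ is a socle constituent of $\domver/M$ it is in particular a composition factor of $\domver/M$, so the lemma gives $M\subseteq\Delta_w$; hence $w\in U$, so $N\subseteq\Delta_w$ and $Q\subseteq\Delta_w$. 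But then $V=Q/M$ is a composition factor of $\Delta_w$, contradicting that $\Delta_w$ avoids $V$. Thus $N=M$, exhibiting every element of $\shadows$ as an intersection of Verma submodules.

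I expect the structural lemma of the second paragraph to be the main obstacle: the whole argument hinges on knowing that $\Delta_w$ is exactly the largest submodule of $\domver$ missing the factor $V=\so_w$, and this is precisely where the simplicity of $\so_w$ (Proposition~\ref{tetraJIA}) and the multiplicity-one property (Proposition~\ref{multfreeJ}) are indispensable. Granting that canonical-location statement, both inclusions and the final contradiction are formal.
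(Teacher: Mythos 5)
Your proof is correct, but it takes a genuinely different route from the paper's. The paper's engine is Proposition~\ref{max=soc}: it reduces an arbitrary intersection $\bigcap_{x\in U}\Delta_x$ to the maximal (hence incomparable) elements of $U$ and, for the reverse inclusion, matches $\soc(\domver/M)$ with $\soc(\domver/\bigcap_{x\in U}\Delta_x)$ via the bijection $\Psi$ of Proposition~\ref{tetraJIA}, concluding $M=\bigcap_{x\in U}\Delta_x$ from the agreement of socles. You bypass Proposition~\ref{max=soc} entirely: your forward inclusion uses the diagonal embedding $\domver/M\inj\bigoplus_{x\in U}\domver/\Delta_x$ plus Proposition~\ref{socinJ} (simpler than the paper's reduction to incomparable elements, which strictly speaking applies Proposition~\ref{max=soc} outside the hypothesis $U\subset\tetra$ and really only needs the easy ``contained in'' half); and your reverse inclusion rests on a new structural lemma, that for $w\in\JI$ the Verma submodule $\Delta_w$ is the \emph{largest} submodule of $\domver$ avoiding $V=\so_w$ as a graded composition factor, proved cleanly from the simplicity of $\so_w$ (Proposition~\ref{tetraJIA}) and graded multiplicity one (Proposition~\ref{multfreeJ}). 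This lemma is a genuine contribution of your write-up: it makes explicit exactly the step the paper leaves terse, namely why equality of socles in the last line of its proof forces $M=\bigcap_{x\in U}\Delta_x$ (your characterization shows each submodule with a given socle pattern in its quotient is pinned down as an intersection of the canonically determined $\Delta_w$'s). What the paper's approach buys in exchange is the full strength of Proposition~\ref{max=soc} --- the surjectivity of the maps $\phi_N$, i.e.\ the socle of the quotient by the intersection being the \emph{direct sum} of the individual socles --- which is reused in Proposition~\ref{shadow to JIideal} to get the poset isomorphism $\shadows\cong\ideal(\JI)$; your argument, self-contained for the present proposition, does not by itself recover that statement.
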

\begin{proof}
Let $U\subseteq W$ and consider $\bigcap_{x\in U}\Delta_x$. By \eqref{posetiso}, we have 
$\bigcap_{x\in U}\Delta_x = \bigcap_{x\in U'}\Delta_x$ where $U'$ is the set of maximal elements in $U$. Now $U'$ consists of incomparable elements, so that Proposition~\ref{max=soc} and Proposition~\ref{socinJ} implies $\bigcap_{x\in U'}\Delta_x\in \shadows$.

Conversely, suppose $M\in \sub$ is such that the socle components in $\domver / M$ belong to $\jc$. Under the bijection $\Psi$ from \eqref{Phi}, the socle components corresponds to a set $U\subset \JI$.
The elements in $U$ are incomparable by Proposition~\ref{tetra=BG} since a socle is semisimple.
By Proposition~\ref{max=soc}, we have
\[\soc(\domver / M) = \soc (\domver /\cap_{x\in U} \Delta_x) ,\]
which implies
\[M = \bigcap_{x\in U}\Delta_x\in\overline{\Ver}.\]
\end{proof}

Consider the set $\ideal(\JI)$ of the poset ideals of $(\JI,\leq)$, i.e., the subsets $U\subseteq \JI$ such that $x\leq y$ for $x\in \JI$ and $y\in U$ implies $x\in U$.
Then $\ideal(\JI)$ is a poset under the inclusion.

\begin{prop}\label{shadow to JIideal}
We have
\[\shadows \cong \ideal(\JI)\]
as posets, extending the isomorphism $\tetra \cong \JI$ from \eqref{tetra to JI}.
\end{prop}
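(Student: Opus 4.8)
The plan is to exhibit a mutually inverse pair of order-preserving maps between $(\shadows,\supseteq)$ and $(\ideal(\JI),\subseteq)$. To a submodule $M\in\shadows$ I would associate
\[
I(M):=\{x\in\JI \mid M\subseteq \Delta_x\}.
\]
First I would check that $I(M)$ is an order ideal: if $y\leq x$ in $\JI$ and $x\in I(M)$, then $\Delta_y\supseteq\Delta_x\supseteq M$ by Proposition~\ref{tetra=BG} (via \eqref{posetiso}), so $y\in I(M)$. Straight from the definition, $M\supseteq M'$ forces $I(M)\subseteq I(M')$, so $M\mapsto I(M)$ is a poset morphism. In the reverse direction I would send an ideal $I$ to $M_I:=\bigcap_{x\in\max I}\Delta_x$, where $\max I$ is the antichain of maximal elements; by Propositions~\ref{max=soc} and~\ref{socinJ} (exactly as in the proof of Proposition~\ref{shadows=intver}) we have $M_I\in\shadows$. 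On the subposet $\tetra$ the map sends $\Delta_x$ to the principal ideal $\{y\in\JI\mid y\leq x\}$, which is the standard embedding $\JI\hookrightarrow\ideal(\JI)$; this is what makes the isomorphism extend $\tetra\cong\JI$.

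The technical heart is the following characterization of $\Delta_y$ for $y\in\JI$. For such $y$, Proposition~\ref{tetraJIA} gives that $\so_y$ is a single simple module $L_z\langle g\rangle$ with $z\in\jc$, and Proposition~\ref{multfreeJ} says this constituent has total multiplicity one in $\domver$. I claim that $\Delta_y$ is the largest submodule of $\domver$ in which $\so_y$ is not a composition factor, so that for every submodule $N\subseteq\domver$
\[
N\subseteq\Delta_y \iff [N:\so_y]=0 \iff [\domver/N:\so_y]\neq 0 .
\]
The proof is short: since $\so_y$ is a constituent of the quotient $\domver/\Delta_y$ and has total multiplicity one, it is not a constituent of $\Delta_y$; for any submodule $N'$ with $\Delta_y\subsetneq N'\subseteq\domver$ the nonzero submodule $N'/\Delta_y\subseteq\domver/\Delta_y$ contains the essential simple socle $\so_y$, so $\so_y$ is a constituent of $N'$; and the submodules avoiding $\so_y$ are closed under sums (again by multiplicity one, and $\domver$ has finite length), hence have a unique largest element, which by the first two observations is $\Delta_y$.

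With this lemma the two nontrivial verifications go through. For injectivity I would use Proposition~\ref{shadows=intver} to write $M=\bigcap_{x\in U}\Delta_x$ for the antichain $U\subseteq\JI$ of socle constituents of $\domver/M$; since $U\subseteq I(M)$ and a larger index set only shrinks the intersection, $M=\bigcap_{x\in I(M)}\Delta_x$, so $I(M)$ recovers $M$ and the inverse map is order-preserving. The main obstacle is the reverse inclusion $I(M_I)\subseteq I$ in the surjectivity statement $I(M_I)=I$, the inclusion $I\subseteq I(M_I)$ being immediate from $\Delta_y\supseteq\Delta_x\supseteq M_I$ whenever $y\leq x\in\max I$. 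To clear it, suppose $M_I\subseteq\Delta_y$ with $y\in\JI$; by the lemma $\so_y$ is a constituent of $\domver/M_I$. The canonical map $\domver/M_I\to\bigoplus_{x\in\max I}\domver/\Delta_x$ is injective, since its kernel is $\bigcap_{x\in\max I}\Delta_x/M_I=0$, so $\so_y$ is a constituent of some $\domver/\Delta_x$ with $x\in\max I$; applying the lemma once more (with $N=\Delta_x$) yields $\Delta_x\subseteq\Delta_y$, i.e. $y\leq x$ by \eqref{posetiso}, whence $y\in I$. This gives $I(M_I)=I$, so $I(-)$ is a bijection, and together with the first paragraph it is the asserted poset isomorphism extending $\tetra\cong\JI$.
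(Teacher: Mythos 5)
Your proof is correct, but it runs on different machinery than the paper's. The paper defines the map $\Omega(M)=\Psi\inv(\{\text{$\jc$-subquotients of }\domver/M\})$, observes that this ideal is generated by the socle components of $\domver/M$, and then gets bijectivity in one stroke from Proposition~\ref{max=soc}, with inverse $U\mapsto\bigcap_{x\in\max(U)}\Delta_x$. You instead define the ideal by containments, $I(M)=\{x\in\JI\mid M\subseteq\Delta_x\}$, and your technical heart is a standalone lemma not stated in the paper: for $y\in\JI$, the Verma submodule $\Delta_y$ is the \emph{largest} submodule of $\domver$ avoiding the constituent $\so_y$, so that $N\subseteq\Delta_y\iff[\domver/N:\so_y]\neq 0$. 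That lemma is sound --- it needs exactly the simplicity of $\so_y$ (Proposition~\ref{tetraJIA}), the graded multiplicity-one statement (Proposition~\ref{multfreeJ}), and closure of avoiding submodules under sums --- and it shows in passing that your $I(M)$ coincides with the paper's $\Omega(M)$, since $M\subseteq\Delta_y$ iff $\Psi(y)$ occurs in $\domver/M$. Your bijectivity checks are then elementary where the paper's are not: injectivity follows from $M=\bigcap_{x\in I(M)}\Delta_x$ via Proposition~\ref{shadows=intver}, and for surjectivity ($I(M_I)=I$) you replace the appeal to Proposition~\ref{max=soc} by the embedding $\domver/M_I\inj\bigoplus_{x\in\max I}\domver/\Delta_x$ together with two applications of your lemma; Proposition~\ref{max=soc} enters only to certify $M_I\in\shadows$. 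What each approach buys: the paper's proof is shorter because Proposition~\ref{max=soc} already packages the socle bookkeeping, but it leaves implicit why $\Omega(M)$ is an ideal and why it is generated by the socle components; your lemma makes both points transparent and gives a clean order-theoretic characterization of the $\Delta_y$, $y\in\JI$, inside $\sub$ that could be quoted independently.
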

\begin{proof}
By Proposition~\ref{tetra=BG}, the poset morphism $\Omega:\shadows \to \ideal(\JI)$ given by 
\[\Omega(M) = \Psi\inv (\{\text{the subquotients in } \domver / M \text{ isomorphic to some $L_z$ with $z\in \jc$}\})\] is well-defined.
Note that the ideal $\Omega(M)$ is (minimally) generated by the subset 
\[\Psi\inv (\{\text{the socle components in } \domver / M\}).\]
By Proposition~\ref{max=soc}, the map $\Omega$ is also an isomorphism. The inverse is 
\[\Omega\inv(U) = \bigcap_{x\in \max(U)} \Delta_x,\]
where $\max (U)$ denotes the set of maximal elements in $U\subseteq \JI$.
\end{proof}

\section{Alternating sign matrices and Verma modules}

The following characterization of $\al_n$ appears to be well-known and is discussed in \cite[Section 3]{manyfacesAl}. 

\begin{prop}\label{tetracoord=al}
We have a lattice isomorphism
\[\ideal(\tetracoord) \cong \al_n\]
which extends the composition of $\tetracoord\cong \JI$ from Proposition~\ref{tetra to JI} and the embedding $\JI\subseteq \sbf_n\inj \al_n$.
\end{prop}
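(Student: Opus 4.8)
The plan is to deduce the isomorphism from Birkhoff's representation theorem: a finite distributive lattice $L$ is recovered from its subposet $J(L)$ of join-irreducible elements as $L\cong\ideal(J(L))$ via $a\mapsto\{\,j\in J(L)\mid j\leq a\,\}$. Accordingly I would consider the order-preserving map
\[
\Theta:\al_n \to \ideal(\JI),\qquad \Theta(A)=\{\,w\in\JI\mid w\leq A \text{ in }\al_n\,\},
\]
whose value is a poset ideal of $\JI$ for every $A$. Composing with the isomorphism $\JI\cong\tetracoord$ of Proposition~\ref{JI=tetracoord} gives the candidate isomorphism $\al_n\cong\ideal(\tetracoord)$; since $\Theta$ sends a permutation $w$ to $\{x\in\JI\mid x\leq w\}$ and restricts on $\JI$ to the identity, it visibly extends $\JI\subseteq\sbf_n\inj\al_n$, so the extension clause will follow at once once $\Theta$ is known to be a lattice isomorphism.

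The argument then splits into two parts, only one of which is hard. First I would identify the join-irreducibles of $\al_n$ with $\JI$. By \eqref{S in Al}, $\al_n$ is the MacNeille completion of $\sbf_n$, hence a finite complete lattice in which $\sbf_n$ is join-dense; as every join-irreducible of a finite lattice lies in any join-dense subset, $J(\al_n)\subseteq\sbf_n$. Because the MacNeille completion preserves the joins and meets already existing in $\sbf_n$, and because a least upper bound of permutations computed in $\sbf_n$ agrees with the one computed in $\al_n$ whenever it is itself a permutation, a permutation is join-irreducible in $\al_n$ if and only if it is join-irreducible in $\sbf_n$; by \cite[Section 4]{LS} these are exactly the bigrassmannians $\JI$. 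As $\sbf_n\inj\al_n$ is an order embedding, this yields $J(\al_n)\cong(\JI,\leq)$ as posets. Second, in any finite lattice $\Theta$ is automatically injective, since $A=\bigvee\Theta(A)$; and $\Theta$ is surjective exactly when every $w\in\JI$ is join-\emph{prime} in $\al_n$, i.e. $w\leq\bigvee V$ forces $w\leq v$ for some $v\in V$. This join-primeness is equivalent to $\al_n$ being distributive, which is the one substantial point.

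The main obstacle is thus to prove that $\al_n$ is a distributive lattice. I would establish this through the corner-sum encoding $A\mapsto C(A)$, $C(A)_{kl}=\sum_{i\leq k,\,j\leq l}A_{ij}$, under which the matrix Bruhat order on $\al_n$ becomes the reverse of the componentwise order on corner-sum matrices. The corner-sum matrices of $n\times n$ alternating sign matrices are precisely the integer arrays satisfying the boundary conditions $C_{0,l}=C_{k,0}=0$, $C_{n,l}=l$, $C_{k,n}=k$ together with the step conditions $C_{k,l}-C_{k-1,l}\in\{0,1\}$ and $C_{k,l}-C_{k,l-1}\in\{0,1\}$. I would check that this set is closed under componentwise minimum and maximum, which realizes $\al_n$ as a sublattice of a product of chains and hence forces distributivity. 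The real work is verifying that the step constraints survive entrywise $\min$ and $\max$; this is an elementary monotonicity argument, carried out in \cite[Section 3]{manyfacesAl}, and I expect this closure statement, rather than anything in the Birkhoff machinery, to be the crux.

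With distributivity established, Birkhoff's theorem makes $\Theta$ a lattice isomorphism $\al_n\cong\ideal(\JI)$; applying $\ideal$ to $\JI\cong\tetracoord$ from Proposition~\ref{JI=tetracoord} then gives $\al_n\cong\ideal(\tetracoord)$. The commutativity of the resulting triangle with $\sbf_n\inj\al_n$ and $\JI\subseteq\sbf_n$ is immediate from the explicit formula for $\Theta$ noted above, completing the proof.
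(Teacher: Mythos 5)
Your proposal is correct, and it follows the same overall skeleton as the paper's proof: identify the join-irreducible elements of the distributive lattice $\al_n$ with $\JI$, then invoke the fundamental theorem of finite distributive lattices (Birkhoff). The difference lies in how you certify the two hypotheses, which the paper handles by citation and you handle by self-contained argument. For the identification $J(\al_n)\cong \JI$, the paper appeals to the explicit characterization of the join-irreducibles of $\al_n$ via monotone triangles in \cite[Section 5]{LS}, matched against the concrete placement of $\tetracoord$ in $\mathbb{Z}^3$; you instead derive it abstractly from the MacNeille completion property of \eqref{S in Al}: join-density forces $J(\al_n)\subseteq \sbf_n$, and preservation of existing joins (together with the fact that upper bounds in $\sbf_n$ remain upper bounds in $\al_n$) makes join-irreducibility in $\al_n$ match the paper's definition of join-irreducibility in $\sbf_n$, whence $J(\al_n)=\JI$ by \cite[Section 4]{LS}. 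For distributivity, which the paper takes as known from \cite{LS,manyfacesAl}, you give the corner-sum argument: the partial-sum arrays of elements of $\al_n$ are exactly the integer arrays with the stated boundary and $\{0,1\}$-step conditions, this set is closed under entrywise $\min$ and $\max$ (the monotonicity check you outline does go through, with the order reversal being harmless since the dual of a distributive lattice is distributive), so $\al_n$ is a sublattice of a product of chains. Your route buys self-containedness and a cleaner conceptual reason for $J(\al_n)=\JI$ that would apply to the MacNeille completion of any finite poset; the paper's route is shorter and keeps the coordinatization explicit, which is what makes the tetrahedron-to-monotone-triangle matching (and the explicit partial-sum formula in the subsequent remark) immediate. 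One small point of bookkeeping you handled correctly but should keep visible: your final commutativity claim needs that the Birkhoff isomorphism sends $w\in\JI$ to the principal ideal it generates, so that the composite $\tetracoord\to\ideal(\tetracoord)\to\al_n$ really is the embedding $\tetracoord\cong\JI\subseteq\sbf_n\inj\al_n$, exactly as the proposition requires.
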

\begin{proof}
 The identification explained in \cite[Section 3]{manyfacesAl} can be made more explicit by placing the tetrahedron in $\mathbb Z^3$ as in Section~\ref{s:tetra}. Doing so, 
 the composition $\tetracoord\cong \JI \subseteq \sbf_n\inj \al_n$ has the image consisting of all join-irreducible elements in the distributive lattice $\al_n$, latter being characterized in \cite[Section 5]{LS} in terms of the monotone triangles (see also \cite[Section 3]{manyfacesAl} for the bijection between the monotone triangles and $\al_n$).
 The claim now follows from the fundamental theorem of distributive lattices.
\end{proof}


All steps needed in our proof of the main theorem are now established.
\begin{theorem}\label{theoremattheend}
We have a lattice isomorphism
\begin{equation}\label{lastiso}
  \al_n  \cong  \overline{\Ver}
\end{equation}
extending the poset isomorphism $\sbf_n\cong \Ver$ given by $w\mapsto \Delta_w$.
\end{theorem}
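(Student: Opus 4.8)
The plan is to obtain the lattice isomorphism by concatenating the poset isomorphisms already proved, and then to check separately that the resulting map restricts to $w\mapsto\Delta_w$ on $\sbf_n$.

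\textbf{Assembling the map.} First I would splice together the established isomorphisms into the chain
\[\al_n \;\cong\; \ideal(\tetracoord)\;\cong\;\ideal(\JI)\;\cong\;\shadows\;=\;\overline{\Ver},\]
where the first isomorphism is Proposition~\ref{tetracoord=al}, the second is induced by the poset isomorphism $\tetracoord\cong\JI$ of Proposition~\ref{JI=tetracoord}, the third is the isomorphism $\Omega$ of Proposition~\ref{shadow to JIideal}, and the final equality is Proposition~\ref{shadows=intver}. Each arrow is a poset isomorphism between finite lattices and is therefore automatically a lattice isomorphism; in particular $\overline{\Ver}$ is a (distributive) lattice because it is isomorphic to the ideal lattice $\ideal(\JI)$. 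Write $\Theta\colon\al_n\xrightarrow{\sim}\overline{\Ver}$ for the composite.

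\textbf{Reducing the compatibility.} Next I would trace an element $w\in\sbf_n$ through $\Theta$. By the fundamental theorem of distributive lattices, the isomorphism $\al_n\cong\ideal(\JI)$ sends $A$ to the ideal of join-irreducibles below it; since the join-irreducibles of $\al_n$ are exactly $\JI$ and the order restricts to Bruhat order on $\sbf_n$, the element $w$ goes to $I_w:=\{x\in\JI \mid x\leq w\}$. Applying $\Omega\inv$ (which sends an ideal $U$ to $\bigcap_{x\in\max U}\Delta_x$ by Proposition~\ref{shadow to JIideal}), commutativity of the square reduces to the single identity
\[\Omega(\Delta_w)=I_w\qquad\text{for all }w\in W,\]
that is, to the statement that for $x\in\JI$ the subquotient $\so_x$ occurs in $\domver/\Delta_w$ if and only if $x\leq w$. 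This identity is not formal: since $\Theta$ is a lattice isomorphism and $w=\bigvee_{x\in I_w}x$ in $\al_n$, we get $\Theta(w)=\bigvee_{x\in I_w}\Delta_x$, which equals the intersection $\bigcap_{x\in I_w}\Delta_x$ (joins in $\overline{\Ver}=(\,\cdot\,,\supseteq)$ are intersections), and the real content is that this intersection collapses to the single Verma module $\Delta_w$.

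\textbf{The key lemma.} This module-theoretic identity is the step I expect to be the main obstacle, since it is exactly what is invisible to the purely lattice-theoretic chain. I would deduce it from the following lemma: for $x\in\JI$ with $\so_x\cong L_z$, the submodule $\Delta_x$ is the \emph{largest} submodule of $\domver$ not admitting $\so_x$ as a composition factor. Granting this and applying it to $N=\Delta_w$, the factor $\so_x$ occurs in $\Delta_w$ precisely when $\Delta_w\not\subseteq\Delta_x$, which by \eqref{posetiso} is equivalent to $x\not\leq w$; since $\so_x$ has multiplicity at most one in $\domver$ by Proposition~\ref{multfreeJ}, it therefore occurs in the quotient $\domver/\Delta_w$ exactly when $x\leq w$, giving $\Omega(\Delta_w)=I_w$ as required. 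To prove the lemma I would use that $\so_x=\soc(\domver/\Delta_x)$ is simple by Proposition~\ref{tetraJIA} and that the socle of a finite-length module is essential: if a submodule $N\subseteq\domver$ satisfies $N\not\subseteq\Delta_x$, then $(N+\Delta_x)/\Delta_x$ is a nonzero submodule of $\domver/\Delta_x$ and hence contains the essential simple socle $\so_x$, so that $\so_x$ is a composition factor of $N/(N\cap\Delta_x)\cong(N+\Delta_x)/\Delta_x$ and thus of $N$; conversely, if $N\subseteq\Delta_x$, then $\so_x$ does not occur in $\Delta_x$ by Proposition~\ref{multfreeJ} (it already occurs once, inside $\domver/\Delta_x$) and so does not occur in $N$. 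This establishes the lemma and completes the proof.
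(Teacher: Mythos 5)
Your proof is correct, and its skeleton is the paper's own: the paper's entire proof of Theorem~\ref{theoremattheend} is just the concatenation $\overline{\Ver}=\shadows\cong\ideal(\JI)\cong\ideal(\tetracoord)\cong\al_n$ supplied by Propositions~\ref{shadows=intver}, \ref{shadow to JIideal}, \ref{JI=tetracoord} and \ref{tetracoord=al}, which is exactly your chain. Where you genuinely depart is in the compatibility check. The paper leaves the ``extending $w\mapsto\Delta_w$'' clause implicit, relying on the extension statements built into Propositions~\ref{shadow to JIideal} and~\ref{tetracoord=al}, which match up the join-irreducibles $\Delta_x\leftrightarrow x$ for $x\in\JI$ (via the remark in the proof of Proposition~\ref{shadow to JIideal} that $\Omega(M)$ is generated by the socle components, so $\Omega(\Delta_x)$ is the principal ideal at $x$); for general $w\in\sbf_n$ one still needs $\bigcap_{x\in\JI,\,x\leq w}\Delta_x=\Delta_w$, equivalently $\Omega(\Delta_w)=\{x\in\JI\mid x\leq w\}$, which the paper never states and which amounts to the description of $\soc(\domver/\Delta_w)$ via maximal bigrassmannians below $w$ known from \cite{kmmBig}. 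You correctly isolate this as the only non-formal step and rederive it self-containedly from the paper's toolkit: your lemma that $\Delta_x$ ($x\in\JI$) is the largest submodule of $\domver$ avoiding $\so_x$ as a composition factor is proved soundly, with the essential-socle argument (any nonzero submodule of $\domver/\Delta_x$ contains its simple socle, by Proposition~\ref{tetraJIA}) in one direction and graded multiplicity-freeness (Proposition~\ref{multfreeJ}) in the other; note only that all multiplicities here must be read in the graded sense, i.e.\ $\so_x\cong L_z\langle g\rangle$ for a fixed shift, as you implicitly do. The trade-off: the paper's one-line proof is shorter but delegates the compatibility to the earlier ``extending'' clauses (where it is only verified on $\JI$, not on all of $\sbf_n$), whereas your version makes the commutativity of the square in Theorem~A fully explicit and, as a by-product, gives a clean intrinsic characterization of the Verma submodules $\Delta_x$, $x\in\JI$, inside $\domver$.
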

\begin{proof}
Proposition~\ref{shadows=intver},
Proposition~\ref{shadow to JIideal},
Proposition~\ref{JI=tetracoord}, and
Proposition~\ref{tetracoord=al} provide a poset isomorphism (which implies that $\overline{\Ver}$ is a lattice).
\end{proof}

\begin{remark}
Let us make the isomorphism \eqref{lastiso} more explicit. Given an element $\bigcap_{x\in U}\Delta_{x}$ in $\overline{\Ver}$, the corresponding element $A(U)\in \al_n$ is obtained as follows.
Consider for each $x\in U\subseteq \sbf_n$ the permutation matrix $[x] = (x_{ij})$ and the partial sums 
\[C_{kl}(x) = \sum_{i\leq k, j\leq l}x_{ij}.\]
Then the $n\times n$ matrix given by
\[C_{kl}(U) = \max_{x\in U} C_{kl}(x)\]
is the partial sum of the alternating sign matrix $A(U)$.
\end{remark}

\begin{remark}
Theorem~\ref{theoremattheend} in particular says that $\overline{\Ver}$ is a lattice. 
The join operation on $\overline{\Ver}$ is the intersection of submodules, more or less by construction. The meet does not agree with the sum of submodules. It would be nice to have an interpretation of the meet operation in terms of the Verma modules. 
\end{remark}

\begin{remark}\label{perip}
In \cite[Theorem 7]{ChM}, certain Verma modules over the periplectic Lie superalgebra $\mathfrak{pe}(n)$ are shown to have a homological property similar to that of $\mathfrak{sl}_n$.
Namely the Verma modules $\Delta_{\mathfrak{pe}}(w.0)$  over $\mathfrak{pe}(n)$ of highest weight $w. 0$, where $w\in \sbf_n$, are submodules of 
$\Delta_{\mathfrak{pe}}(0)$ in a canonical way respecting the Bruhat order on $w\in \sbf_n$ (see \cite[Proposition 3 (1), Theorem 2 (2)]{ChM}).
Moreover, Propositions~\ref{socinJ},~ \ref{multfreeJ},~\ref{tetraJIA} hold for this setting (see \cite[Theorem~7]{ChM}). 
Thus the results in Section~\ref{s:sub} and the proof of Theorem~\ref{theoremattheend} are valid for $\mathfrak{pe}(n)$.
\end{remark}

\bibliographystyle{plain}
\bibliography{bib}

\end{document}